\numberwithin{equation}{section}
\newtheorem{theorem}{\textbf{Theorem}}[section]
\newtheorem{theorem*}{\textbf{Theorem}}
\newtheorem{definition}[theorem]{\textbf{Definition}}
\newtheorem{proposition}[theorem]{\textbf{Proposition}}
\newtheorem{question}[theorem]{\textbf{Question}}
\newtheorem{claim}[theorem*]{\textbf{Claim}}
\newtheorem{corollary}[theorem]{\textbf{Corollary}}
\newtheorem{remark}[theorem]{\textbf{Remark}}
\newtheorem{example}[theorem]{\textbf{Example}}
\newtheorem{conjecture}[theorem]{\textbf{Conjecture}}
\newtheorem{definition/proposition}[theorem]{\textbf{Definition/Proposition}}
\providecommand{\customgenericname}{}
\newcommand{\newcustomtheorem}[2]{%
	\newenvironment{#1}[1]
	{%
		\renewcommand\customgenericname{#2}%
		\renewcommand\theinnercustomgeneric{##1}%
		\innercustomgeneric
	}
	{\endinnercustomgeneric}
}
\def\N{{\mathbb N}}
\def\R{\mathbb{R}}
\def\Z{{\mathbb Z}}
\def\C{{\mathbb C}}
\def\D{{\mathbb D}}
\newcommand{\CP}{\mathbb{C}\mathbb{P}}
\def\cA{{\mathcal A}}
\def\cL{{\mathcal L}}
\def\cM{{\mathcal M}}
\def\cO{{\mathcal O}}
\def\rd{{\rm d}}
\def\la{\langle\,}
\def\ra{\,\rangle}
\DeclareMathOperator{\Ima}{im}
\DeclareMathOperator{\Hom}{Hom}
\DeclareMathOperator{\vol}{vol}
\DeclareMathOperator{\Crit}{Crit}
\title{On the minimal symplectic area of Lagrangians}
\author{Zhengyi Zhou}
\begin{document}
	\maketitle
\begin{abstract}
We show that the minimal symplectic area of Lagrangian submanifolds are universally bounded in symplectically aspherical domains with vanishing symplectic cohomology. If an exact domain admits a $k$-semi-dilation, then the minimal symplectic area is universally bounded for $K(\pi,1)$-Lagrangians. As a corollary, we show that the Arnol'd chord conjecture holds for the following four cases: (1) $Y$ admits an exact filling with $SH^*(W)=0$ (for some nonzero ring coefficient); (2) $Y$ admits a symplectically aspherical filling with $SH^*(W)=0$ and simply connected Legendrians; (3) $Y$ admits an exact filling with a $k$-semi-dilation and the Legendrian is a $K(\pi,1)$ space; (4) $Y$ is the cosphere bundle $S^*Q$ with $\pi_2(Q)\to H_2(Q)$ nontrivial and the Legendrian has trivial $\pi_2$.  In addition, we obtain the existence of homoclinic orbits in case (1). We also provide many more examples with $k$-semi-dilations in all dimensions $\ge 4$.
\end{abstract}
\section{Introduction}
One of the fundamental questions in contact geometry is the following chord conjecture by Arnol'd \cite{arnol1986first}.
\begin{conjecture}
	Let $(Y,\xi)$ be a closed contact manifold, then any closed Legendrian carries a Reeb chord for any contact form $\alpha$.
\end{conjecture}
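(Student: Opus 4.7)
Since the conjecture is open in general, my aim is to establish it in the four filling settings listed in the abstract, in each case reducing to the universal bound on the minimal symplectic area of Lagrangians proved earlier in the paper. I would argue by contradiction: fix a closed Legendrian $\Lambda \subset Y$ satisfying the topological hypothesis of the relevant case, and a contact form $\alpha$ with $\ker\alpha = \xi$ for which $\Lambda$ admits no Reeb chord. Since the chord-free property is invariant under the rescaling $\alpha \mapsto c\alpha$, I may moreover dilate $\alpha$ to make any prescribed geometric length as large as needed.

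The core geometric step is to convert the no-chord hypothesis into a family of closed Lagrangians $L_T \subset \hat W$ (the symplectic completion of the filling $W$) whose minimal symplectic disk area grows linearly in $T$. The construction uses the Reeb flow-out of $\Lambda$ inside the positive symplectization $(0,\infty) \times Y \subset \hat W$: for a suitable radial profile $r(t)$, the set $\{(r(t), \phi_t^\alpha(p)) : t \in [0,T],\ p \in \Lambda\}$ is an embedded Lagrangian cylinder -- embedded precisely because of the chord-free assumption -- and any disk bounded by it accumulates symplectic area of order $T$ from the Liouville primitive $e^r\alpha$. To close the cylinder into a closed Lagrangian in $\hat W$, I would concatenate two disjoint Legendrian copies $\Lambda$ and $\phi_T^\alpha(\Lambda)$ via a Lagrangian mapping-torus of the Reeb flow, or cap off inside the filling using a Weinstein-type construction. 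The topological hypotheses on $\Lambda$ in each case (simply connected in (2), $K(\pi,1)$ in (3), $\pi_2 = 0$ combined with nontriviality of $\pi_2(Q) \to H_2(Q)$ in (4)) are tailored so that the resulting $L_T$ lies in the scope of the corresponding area-bound theorem. Applying that theorem then yields a $T$-independent upper bound on the minimal symplectic area of $L_T$, which contradicts the linear lower bound once $T$ is sufficiently large.

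The main obstacle is the closing-up step together with preserving the linear-in-$T$ lower bound on the minimal area: producing a genuinely closed Lagrangian of the correct homotopy type, and verifying that it meets the hypotheses of the relevant area-bound theorem, is the delicate geometric ingredient where the chord-free hypothesis must be exploited carefully. Once this is in place, the contradiction is immediate. For the existence of homoclinic orbits asserted in case (1), I would apply the same contradiction scheme to a Legendrian produced by perturbing a stable/unstable-type configuration, while the further examples of fillings admitting a $k$-semi-dilation relevant to case (3) would be constructed independently by explicit Weinstein/subcritical surgery arguments enlarging the admissible class of $W$.
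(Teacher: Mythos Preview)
Your high-level strategy matches the paper's: assume no chord, build closed Lagrangians in the filling with arbitrarily large minimal symplectic area, and contradict the area bound. The gap is exactly where you locate it---the ``closing-up'' step---but the paper (following Mohnke) resolves it by a construction that is simpler than either of your proposed fixes and avoids the issue entirely.

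Rather than sweeping out a Lagrangian \emph{cylinder} via a one-parameter radial profile $r(t)$ over $t\in[0,T]$ and then trying to cap it, one works directly in the two-dimensional strip. First realize the contact form: since $\alpha=f\lambda|_{\partial W}$, the graph $r=r_0 f$ (for small $r_0$) is a contact hypersurface inside $W$ on which $\widehat\lambda$ restricts to $r_0\alpha$; let $W'$ be the exact subdomain it bounds, so $W'\subset W$ is a homotopy equivalence and $A_{\min}(\cL,W')\le A_{\min}(\cL,W)$. In the collar $\partial W'\times[\epsilon,1]_{r'}\subset W'$ the map
\[
\Phi:\Lambda\times[\epsilon,1]\times\R\to\partial W'\times[\epsilon,1],\qquad (x,s,t)\mapsto(\phi_t(x),s)
\]
is an embedding precisely because $\Lambda$ has no Reeb chord. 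Now take any embedded \emph{closed} curve $\gamma\subset[\epsilon,1]\times\R$; then $\Phi(\Lambda\times\gamma)\cong\Lambda\times S^1$ is an honest closed Lagrangian in $W'$ (since $\lambda|_\Lambda=0$ and $\Phi^*\rd(r'\lambda|_{\partial W'})=\rd s\wedge\rd t$), and $A_{\min}(\Phi(\Lambda\times\gamma),W')$ is exactly the area enclosed by $\gamma$, which can be made arbitrarily large. No capping, no Weinstein handles, and the diffeomorphism type $\Lambda\times S^1$ automatically inherits the needed topological property in each case (e.g.\ $K(\pi,1)$ when $\Lambda$ is). Your ``cap off inside the filling'' alternative is both unnecessary and dangerous, since it would generally destroy the $K(\pi,1)$ or $\pi_2$-triviality hypotheses required in cases (3) and (4). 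Also note that the Lagrangians must live in the \emph{compact} domain $W'\subset W$, not in $\widehat W$, for the area bound to apply---this is why one passes to the interior hypersurface rather than just rescaling $\alpha$.
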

The conjecture was completely addressed in dimension $3$ by Hutchings-Taubes \cite{hutchings2013proof}. The surgical picture in \cite{hutchings2013proof} first appeared in Cieliebak's proof \cite{cieliebak2002handle} of the chord conjecture for some special cases, as the Reeb chord of a Legendrian sphere is closely related to the surgery formula for Floer theories \cite{bourgeois2012effect}. However, in dimension $\ge 5$, a Legendrian can have a diffeomorphism type different from spheres, hence the surgery point of view may not apply in general. On the other hand, Mohnke used a different method and proved the chord conjecture for any subcritically fillable contact manifolds in all dimensions \cite{mohnke2001holomorphic}.  

Mohnke's argument revealed a closely related concept called minimal symplectic area \cite{cieliebak2018punctured}. The minimal symplectic area of a closed Lagrangian $L$ inside a symplectic manifold $(W,\omega)$ is defined to be
\begin{equation}\label{eqn:Amin}
A_{\min}(L,W):=\inf\left\{\int u^*\omega\left|u\in \pi_2(W,L), \int u^*\omega > 0\right.\right\}\in [0,\infty].
\end{equation}
Let $\cL$ be a set of diffeomorphism types of closed connected manifolds, we define 
$$A_{\min}(\cL,W):=\sup\left\{ A_{\min}(L,W)\left|L\text{ is a Lagrangian in } W, \text{the diffeomorphism type of } L \text{ is in }\cL\right.\right\}.$$
There are two collections that we are interested in, $\cL_{all}$ the collection of all $n$-manifolds, $\cL_{K(\pi,1)}$ the collection of all $n$-dimensional $K(\pi,1)$-spaces, in particular, those admitting a non-positive sectional curvature. The following theorem is due to Mohnke \cite{mohnke2001holomorphic}.
\begin{theorem}[\cite{mohnke2001holomorphic}]\label{thm:chord}
	If the contact manifold $Y$ admits an \emph{exact} filling $W$, such that $A_{\min}(\cL_{all},W)<\infty$, then the chord conjecture holds for $Y$. If $A_{\min}(\cL_{K(\pi,1)},W)<\infty$, then the chord conjecture holds for $K(\pi,1)$ Legendrians\footnote{Such statement is not the minimal requirement for the argument in \cite{mohnke2001holomorphic} to work.}.
\end{theorem}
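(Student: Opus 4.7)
I would follow Mohnke's holomorphic-disk strategy, arguing by contradiction. Suppose some contact form $\alpha$ on $Y$ admits a closed Legendrian $\Lambda$ with no Reeb chord --- taking $\Lambda\in\cL_{all}$ for the first claim and $\Lambda\in\cL_{K(\pi,1)}$ for the second. In the completion $(\hat W,\hat\lambda)$ with cylindrical end $([0,\infty)\times Y, e^s\alpha)$, the first step is to produce from $\Lambda$ a closed embedded Lagrangian $L\subset \hat W$ whose diffeomorphism type stays in the same class. The construction exploits the Weinstein $1$-jet neighborhood $J^1\Lambda\cong T^*\Lambda\times\R_t$ of $\Lambda$ in $Y$, with Reeb vector field $\partial_t$: the no-chord hypothesis, combined with compactness of $\Lambda$, guarantees that Reeb trajectories starting on $\Lambda$ stay uniformly off $\Lambda$ for all positive times, which one uses to cap off the cylindrical Lagrangian $\R\times\Lambda\subset\hat W$ inside the $1$-jet neighborhood. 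A suitable choice of cap yields $L$ diffeomorphic to $\Lambda$ (or a topologically controlled modification of it), remaining aspherical whenever $\Lambda$ is.

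I would next show that $L$ admits no non-constant holomorphic disk, whence $A_{\min}(L,\hat W)=+\infty$. Stretching the neck along $\{s=T_k\}\times Y$ for $T_k\to\infty$ with a cylindrical $\hat J$, any sequence of positive-area disks on $L$ SFT-converges to a holomorphic building. Exactness of $(\hat W,\hat\lambda)$ rules out sphere bubbling and non-constant closed components; the remaining non-constant pieces are punctured disks in the symplectization with boundary on $\R\times\Lambda$, whose punctures must asymptote to Reeb chords of $\Lambda$. The no-chord hypothesis forbids such asymptotics, forcing all such components to be trivial, and action balance then forces the main disk component to have zero area --- contradicting positivity.

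For the contradiction I would apply a conformal rescaling: flowing $L$ backwards along the Liouville vector field $Z$ for time $S$ large enough that $\phi_{-S}^Z(L)\subset W$ multiplies disk areas by $e^{-S}$, giving $A_{\min}(L,\hat W) = e^S A_{\min}(\phi_{-S}^Z(L),W) \leq e^S A_{\min}(\cL_{all},W) < \infty$, and similarly with $\cL_{K(\pi,1)}$ in the second case. This contradicts $A_{\min}(L,\hat W)=+\infty$ from the preceding step, completing the argument.

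The main obstacle is the capping construction in the first step --- turning the no-chord hypothesis into an explicit closed embedded Lagrangian $L\subset\hat W$ of the correct diffeomorphism class, and in particular preserving asphericity for the $K(\pi,1)$ case, which is the only place the hypothesis $\Lambda\in\cL_{K(\pi,1)}$ is used. The SFT neck-stretching and the Liouville rescaling are standard once the geometric setup is in place.
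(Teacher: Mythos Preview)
Your proposal has two genuine gaps.

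First, the capping construction is not just technically delicate---it is dimensionally impossible as stated. If $\dim Y = 2n-1$ then $\dim \Lambda = n-1$, while a Lagrangian in $\hat W$ must have dimension $n$; so $L$ cannot be diffeomorphic to $\Lambda$. The actual construction (Mohnke's) does not cap the Lagrangian cylinder $\R\times\Lambda$ at all. Instead, the no-chord hypothesis gives an embedding $\Lambda\times\R\hookrightarrow Y$ via the Reeb flow, and one takes $\Phi(\Lambda\times\gamma)$ for a closed curve $\gamma$ in the half-strip $[\epsilon,1]_r\times\R_t$ inside a collar of $\partial W$. This yields a closed Lagrangian diffeomorphic to $\Lambda\times S^1$, which is indeed a $K(\pi,1)$ when $\Lambda$ is.

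Second, and more seriously, the implication ``$L$ bounds no non-constant holomorphic disk $\Rightarrow A_{\min}(L,\hat W)=+\infty$'' is false. The quantity $A_{\min}$ is purely topological, defined via $\pi_2(\hat W,L)$ and $\omega$; a class $\beta\in\pi_2(\hat W,L)$ with $\int_\beta\omega>0$ need not have any $J$-holomorphic representative for a given (or generic) $J$. Your neck-stretching argument, even if it ran, would only exclude holomorphic disks for the stretched almost complex structures, which says nothing about the topological minimal area. You never produce a sequence of holomorphic disks to which SFT compactness could be applied.

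The paper's (i.e.\ Mohnke's) argument sidesteps both issues by a direct computation: since $\lambda|_\Lambda=0$ and $\Phi^*(\rd(r\lambda|_{\partial W}))=\rd s\wedge\rd t$, one has $A_{\min}(\Phi(\Lambda\times\gamma),W)=$ area enclosed by $\gamma$, which can be taken arbitrarily large. No holomorphic curves, no SFT, no neck-stretching---just choose $\gamma$ with enclosed area exceeding the assumed bound $A_{\min}(\cL,W)$.
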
	
Then the chord conjecture for subcritically fillable contact manifolds follows from that the minimal symplectic area is bounded from above by the displacement energy of the subcritical domain $W$ by \cite{chekanov1998lagrangian}.  Our main theorem is finding new families of symplectic manifolds with universally bounded minimal symplectic area, which are not necessarily displaceable.

\begin{theorem}\label{thm:main}
	Let $W$ be an symplectically aspherical domain and $R$ be unital commutative ring such that $1\ne 0 \in R$.
	\begin{enumerate}
		\item\label{c1} If $SH^*(W;R)=0$, then $A_{\min}(\cL_{all},W)<\infty$.
		\item\label{c2} If $W$ is an exact domain and admits a $k$-semi-dilation in $R$ coefficient \cite{zhou2019symplectic}, then $A_{\min}(\cL_{K(\pi,1)},W)<\infty$.
	\end{enumerate}
\end{theorem}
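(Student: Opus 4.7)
The strategy is to use the algebraic hypothesis on $W$ to produce, for every admissible Lagrangian $L\subset W$, a nonconstant $J$-holomorphic disk with boundary on $L$ of symplectic area bounded by a universal constant $A=A(W)$ depending only on $W$; this immediately gives $A_{\min}(L,W)\le A$. The intermediate device is a Lagrangian closed-open map that feeds a Floer-theoretic bounding cochain in $CF^{*}(H;R)$ into a nonvanishing count of disks on $L$.

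First I would work in the completion $\widehat{W}$ with an admissible Hamiltonian $H$ of slope $s$ at infinity, so that the Floer complex $CF^{*}(H;R)$ computes a truncation of $SH^{*}(W;R)$. In case (\ref{c1}), the vanishing $SH^{*}(W;R)=0$ yields a primitive $x\in CF^{*}(H;R)$ of a cocycle representing the unit, of action bounded by a universal constant $A$ independent of $s$ for $s$ large, because the primitive already exists at the level of $SH^{*}$. In case (\ref{c2}), a $k$-semi-dilation provides a chain $x$ relating an iterate of the BV operator $\Delta$ to the unit modulo a filtration, again with a universal action bound.

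Next, for a Lagrangian $L$ of the allowed class, I would consider the moduli space of $J$-holomorphic half-planes $u\colon D\setminus\{0\}\to\widehat{W}$ with $u(\partial D)\subset L$, asymptotic at the interior puncture to an orbit of $H$ and possibly coupled to a Morse flowline on $L$. Counting these defines a closed-open-type chain map $\mathcal{OC}_{L}\colon CF^{*}(H;R)\to C^{*}(L;R)$. The chain-level identity on $x$ forces $\mathcal{OC}_{L}(x)$ to cobound the PSS image of the unit, which is the fundamental class of $L$, hence nonzero; so the moduli space is nonempty, and the area of any curve in it is bounded by the action of $x$ and thus by $A$. The aspherical hypothesis excludes sphere bubbling, and any disk bubble on $L$ only decreases area, so the bound $A_{\min}(L,W)\le A$ survives the compactification. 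In case (\ref{c2}), the $K(\pi,1)$ hypothesis on $L$ (so $\pi_{2}(L)=0$) is used precisely to convert the $S^{1}$-equivariant/BV information in a semi-dilation into genuine disks on $L$, by capping off $S^{1}$-families of boundary loops using $\pi_{1}(L)$ alone.

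The hard part will be rigorously defining the closed-open map $\mathcal{OC}_{L}$ for an arbitrary closed Lagrangian, with no exactness, monotonicity, or Maslov positivity hypothesis---precisely the setting in which disk bubbling on $L$ is normally considered a pathology. Here, however, disk bubbling is the intended output, so the right packaging is to show that every boundary stratum of the Gromov compactification either is captured by the chain-level identity $dx=e$ (breaking of the Floer input) or yields a genuine nonconstant disk on $L$ of area $\le A$. The additional subtlety in case (\ref{c2}) is that a semi-dilation lives in $S^{1}$-equivariant symplectic cohomology: one must choose equivariantly compatible perturbations and invoke $K(\pi,1)$ to promote $S^{1}$-parametrized loop-family outputs into disk outputs. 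I expect both subtleties to be manageable with asphericity as the key input that universally controls energy.
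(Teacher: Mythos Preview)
Your approach differs substantially from the paper's. The paper never builds a closed--open map to chains on $L$; instead it constructs a \emph{truncated Viterbo transfer} $SH^{*}_{<A}(W)\to SH^{*}(T^{*}L)$ (and its $S^{1}$-equivariant analogue) whenever $A_{\min}(L,W)$ exceeds a threshold depending only on $A$. One embeds a Weinstein neighbourhood $D^{*}_{\epsilon}L\hookrightarrow W$, takes a step-shaped Hamiltonian, and uses the integrated maximum principle to confine the relevant Floer cylinders to the cotangent piece; the point is that by passing to $T^{*}L$ one works entirely with Hamiltonian Floer theory on exact manifolds and never needs Lagrangian Floer theory for a non-exact, non-monotone $L$. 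The proof is then by contrapositive: if $A_{\min}(L,W)$ were large, the transfer would exist and would carry the unit (resp.\ the semi-dilation) to the unit (resp.\ a semi-dilation) in $SH^{*}(T^{*}L)$ (resp.\ $SH^{*}_{S^{1}}(T^{*}L)$), contradicting $SH^{*}(T^{*}L)\neq 0$ (resp.\ the fact, recorded as an example in the paper, that $T^{*}L$ carries no semi-dilation when $L$ is a $K(\pi,1)$).

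For part~\eqref{c1} your idea is morally close---your $\mathcal{OC}_{L}$ is essentially the Viterbo transfer composed with restriction to the zero section---but your packaging as a direct disk-production argument, with bubbling as ``the intended output'', is more delicate than necessary and you have not explained how a punctured half-plane asymptotic to a Hamiltonian orbit actually yields an element of $\pi_{2}(W,L)$ with symplectic area (not Floer energy) bounded by $A$. The contrapositive is cleaner: assume $A_{\min}(L,W)>A$; then there is no disk bubbling in any moduli space of energy $\le A$, so your map is a genuine chain map on the filtered complex and forces $1=0$ in $H^{*}(L)$. This is precisely what the paper does through the Weinstein neighbourhood, and it bypasses the stratum-by-stratum analysis entirely.

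For part~\eqref{c2} there is a genuine gap. Your explanation of the role of $K(\pi,1)$---``capping off $S^{1}$-families of boundary loops using $\pi_{1}(L)$ alone''---does not identify a usable obstruction, and $\pi_{2}(L)=0$ by itself does nothing of the sort. The paper's obstruction is specific: a semi-dilation is represented by \emph{contractible} orbits, and for $L$ a $K(\pi,1)$ the contractible component of the free loop space of $L$ deformation retracts to $L$, so the contractible part of $SH^{*}_{+,S^{1}}(T^{*}L)$ vanishes and no element there can hit $1$ under $\pi_{0}\circ\delta$. To run your argument you would need an $S^{1}$-equivariant closed--open map with a target (e.g.\ cyclic homology of the $A_{\infty}$-algebra of $L$) in which the analogous ``no contractible positive part'' statement holds, together with compatibility with $\delta$ and the $R[u]$-module structure; none of this is supplied, and it is not a routine matter.
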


\begin{remark}
	It is not true that $W'\subset W$ implies that $A_{\min}(\cL,W')\le A_{\min}(\cL,W)$ since $\pi_2(W',L)\to \pi_2(W,L)$ may not be surjective. For example, if $\cL=\{T^n\}$, then $A_{\min}(\cL,B^{2n}(1))<\infty$. On the other hand $A_{\min}(\cL,D_{\epsilon}^*T^n)=\infty$ as it contains an exact $T^n$, but the disk bundle $D_{\epsilon}^*T^n$ with small enough radius $\epsilon$ can be symplectically embedded into the ball $B^{2n}(1)$. However if $W'\subset W$ is a homotopy equivalence, then we have $A_{\min}(\cL,W')\le A_{\min}(\cL,W)$. In particular, we can use $A_{\min}(\cL,W)$ to define a generalized symplectic capacity for star shaped domains in $\C^n$, e.g.\ the Lagrangian capacity \cite{hofer2007quantitative,cieliebak2018punctured}.
\end{remark}

By \cite{kang2014symplectic}, if an exact domain $W$ is displaceable, then $SH^*(W;\Z)=0$. But the vanishing of symplectic cohomology does not imply being displaceable. Although the vanishing of symplectic cohomology is a restriction, there are still many examples (1) flexible Weinstein domains \cite{bourgeois2012effect,murphy2018subflexible}, (2) subflexible Weinstein domains \cite{murphy2018subflexible}, (3) $V\times \D$ for any exact domain $V$ \cite{oancea2006kunneth}, (4) prequantization line bundles over symplectically aspherical manifolds \cite{oancea2008fibered}. Moreover, there are many non-flexible examples whose symplectic cohomology vanishes in certain finite field coefficient \cite{abouzaid2010altering,lazarev2020prime}.  The $k$-semi-dilation is a generalization of the vanishing of symplectic cohomology, as admitting a $0$-semi-dilation is equivalent to the vanishing of symplectic cohomology. Examples with $k$-semi-dilations include cotangent bundles of simply connected manifolds \cite[Proposition 5.1]{zhou2019symplectic}, more generally tree plumbings of cotangent bundles of simply connected (spin) manifolds of dimension at least $3$ \cite[Proposition 6.2]{li2019exact}, and Milnor fibers of many canonical singularities \cite[Theorem A]{zhou2019symplectic}. Moreover, the $k$-semi-dilation is preserved under subcritical and flexible surgeries under mild conditions, products and Lefschetz fibrations \cite[\S 3.6]{zhou2019symplectic}. In this paper, we also find more examples with $k$-semi-dilations, see Theorem \ref{thm:SD}. Therefore there is a rich class of examples that Theorem \ref{thm:main} can be applied to. There are also examples of vanishing of symplectic cohomology using local systems \cite{albers2017local}. As a corollary we have the following.
\begin{theorem}\label{thm:local}
 Let $Q$ be a closed manifold such that the Hurewicz map $\pi_2(Q)\to H_2(Q)$ is nontrivial. Then $A_{\min}(L,D^*Q)$ is uniformly bounded for any Lagrangian $L$ such that $\pi_2(L)\to \pi_2(T^*Q)$ is trivial.
\end{theorem}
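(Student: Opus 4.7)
The plan is to combine Theorem~\ref{thm:main}(2) with a twisted version of symplectic cohomology, in the spirit of \cite{albers2017local}. First, I would note that $D^*Q$ is an exact Liouville domain, hence symplectically aspherical, so the setup of Theorem~\ref{thm:main} applies. I would then use the hypothesis that the Hurewicz map $\pi_2(Q)\to H_2(Q)$ is nontrivial to produce a $k$-semi-dilation on $D^*Q$ with coefficients in a suitable local system $\cL$ pulled back from $Q$: for simply connected $Q$ this is essentially \cite[Proposition 5.1]{zhou2019symplectic}, and in general one builds $\cL$ from a $2$-cohomology class on $T^*Q$ that pairs nontrivially with a class witnessing the Hurewicz hypothesis, obtaining a semi-dilation in twisted $SH^*(D^*Q;\cL)$ via (the twisted version of) Viterbo's isomorphism with loop space homology.

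Next, I would match the Lagrangian condition to the coefficient system. Using the long exact sequence of the pair,
\[
\pi_2(L)\to\pi_2(T^*Q)\to\pi_2(T^*Q,L)\to\pi_1(L)\to\pi_1(T^*Q),
\]
the hypothesis that $\pi_2(L)\to\pi_2(T^*Q)$ is trivial says exactly that $\pi_2(T^*Q)$ injects into $\pi_2(T^*Q,L)$. Since $\cL$ is detected on disks through its pairing with the $\pi_2(T^*Q)$-component of their relative class, the triviality hypothesis is precisely what is needed for $\cL$ to restrict to the trivial local system on the moduli of holomorphic disks with boundary on $L$. This is the replacement here for the $K(\pi,1)$ condition appearing in Theorem~\ref{thm:main}(2).

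Finally, I would re-run the Mohnke-style neck-stretching argument from the proof of Theorem~\ref{thm:main}(2) in the twisted setting. A putative sequence of disks with unbounded symplectic area bounded by $L$ would, after stretching along a contact-type hypersurface around $L$, produce a limit configuration whose top component is detected by the (twisted) $k$-semi-dilation; because $\cL$ is trivial along the disk side, the twisted count recovers the ordinary area bound, contradicting the unboundedness and yielding $A_{\min}(L,D^*Q)<\infty$.

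The main obstacle is the careful setup of Floer and SFT-type theory with local coefficients in the presence of a Lagrangian boundary: one must verify that the twisted $k$-semi-dilation produces the required nontrivial curve output, and, crucially, that the vanishing of $\pi_2(L)\to\pi_2(T^*Q)$ is both necessary and sufficient to trivialize $\cL$ on every relevant moduli space so that the energy estimate from Theorem~\ref{thm:main}(2) goes through unchanged. Once this compatibility is established, the uniform bound follows exactly as in the untwisted case.
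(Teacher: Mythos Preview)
Your proposal has a genuine gap, and it stems from routing the argument through case~\eqref{c2} of Theorem~\ref{thm:main} rather than case~\eqref{c1}. The paper's proof does not construct a $k$-semi-dilation in twisted coefficients; it uses the result of \cite{albers2017local} that, under the Hurewicz hypothesis, there is a local system $\rho$ (classified by an element of $\Hom_{inv}(\pi_2(T^*Q),\C^*)$) for which $SH^*(T^*Q;\rho)=0$ outright. The hypothesis $\pi_2(L)\to\pi_2(T^*Q)$ trivial then forces $\rho$ to restrict trivially to a Weinstein neighbourhood of $L$, so the target of the truncated Viterbo transfer is the ordinary $SH^*(T^*L;\C)$, which is always nonzero. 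One then contradicts vanishing exactly as in case~\eqref{c1}.

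Your route via a $k$-semi-dilation breaks at the last step. Even if you produced a twisted $k$-semi-dilation on $D^*Q$ and even if $\rho$ trivialises over $L$, transferring it only shows that $T^*L$ carries a $k$-semi-dilation in ordinary coefficients. To get a contradiction from that you would need $T^*L$ \emph{not} to admit a $k$-semi-dilation, and this is exactly where the $K(\pi,1)$ hypothesis enters in Theorem~\ref{thm:main}\eqref{c2} (via Example~\ref{ex:counter}). The condition that $\pi_2(L)\to\pi_2(T^*Q)$ is trivial is not a substitute: it says nothing about $\pi_2(L)$ itself, and there is no reason $L$ should be aspherical. So the ``replacement for the $K(\pi,1)$ condition'' you propose does not actually close the argument.

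A secondary point: your description of the mechanism behind Theorem~\ref{thm:main} as ``Mohnke-style neck-stretching'' with limit configurations is not what happens in the paper. No SFT compactness or neck-stretching is used; the proof goes through a truncated Viterbo transfer (Propositions~\ref{prop:Viterbo}--\ref{prop:Viterbo3}) built directly from Hamiltonian Floer theory and the integrated maximum principle. The local-system version needed here is obtained by observing that twisting only modifies the signs in the differential, not the underlying moduli spaces, so Proposition~\ref{prop:Viterbo} carries over verbatim.
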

The topological conditions in \eqref{c2} of Theorem \ref{thm:main} and \ref{thm:local} are necessary. For example, the zero section of $D^*S^2$ is exact, hence has infinite minimal symplectic area. However, \eqref{c2} of Theorem \ref{thm:main} or \ref{thm:local} imply that oriented Lagrangians that are not spheres have a universal minimal symplectic area bound. Indeed, there are many such Lagrangians, as we can take several copies of the zero section after different Hamiltonian perturbations and then use Lagrangian surgeries to resolve the intersections to get smooth Lagrangians with higher genus.

Combining Theorem \ref{thm:chord}, \ref{thm:main}, and \ref{thm:local} together, we prove the chord conjecture for the following examples.
\begin{corollary}\label{cor:chord}
	Let $Y$ be a contact manifold and $R\ne 0$ a commutative ring.
	\begin{enumerate}
		\item\label{ch1} If $Y$ has an exact filling $W$ with $SH^*(W;R)=0$,  then the chord conjecture holds for $Y$. 
		\item\label{ch2} If $Y$ has a symplectically aspherical filling $W$ with $SH^*(W;R)=0$,  then the chord conjecture holds for any Legendrian $\Lambda$ such that $\pi_1(\Lambda)\to \pi_1(W)$ is trivial, in particular, any simply connected Legendrian.
		\item\label{ch3} If $Y$ has an exact filling $W$ with a $k$-semi-dilation in $R$-coefficient, then the chord conjecture holds for any $K(\pi,1)$ Legendrian.
		\item\label{ch4} If $Y=S^*Q$ with  the Hurewicz map $\pi_2(Q)\to H_2(Q)$ nontrivial, then the chord conjecture holds for any Legendrian $\Lambda$ such that $\pi_2(\Lambda)\to \pi_2(T^*Q)$ is trivial.
	\end{enumerate}
\end{corollary}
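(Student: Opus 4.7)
The plan is to deduce the four cases of Corollary \ref{cor:chord} by combining the appropriate $A_{\min}$-bound (from Theorem \ref{thm:main} or \ref{thm:local}) with Mohnke's chord-existence argument (the content of Theorem \ref{thm:chord}). Throughout I invoke the footnote to Theorem \ref{thm:chord} implicitly: the proof in \cite{mohnke2001holomorphic} only requires an area bound on the particular Lagrangian lift of the Legendrian in question, rather than a uniform bound over an entire diffeomorphism class.

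Cases \eqref{ch1} and \eqref{ch3} are direct consequences of the quoted theorems. An exact filling is symplectically aspherical, so in \eqref{ch1} Theorem \ref{thm:main}\eqref{c1} provides $A_{\min}(\cL_{all},W)<\infty$, and the first clause of Theorem \ref{thm:chord} then yields the Reeb chord. In \eqref{ch3} the $k$-semi-dilation together with Theorem \ref{thm:main}\eqref{c2} gives $A_{\min}(\cL_{K(\pi,1)},W)<\infty$, and the second clause of Theorem \ref{thm:chord} covers any $K(\pi,1)$ Legendrian. For \eqref{ch4}, the filling $W=D^*Q$ is exact; the Lagrangian lift $L_\Lambda$ of any Legendrian $\Lambda\subset S^*Q$ is homotopy equivalent to $\Lambda$, so the condition $\pi_2(\Lambda)\to \pi_2(T^*Q)$ trivial transfers to $L_\Lambda$, whence Theorem \ref{thm:local} gives $A_{\min}(L_\Lambda,D^*Q)<\infty$, and Mohnke's argument applied to this single Lagrangian produces the chord.

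The substantive case is \eqref{ch2}, where $W$ is only symplectically aspherical, so Theorem \ref{thm:chord} does not apply verbatim. The plan is to first apply Theorem \ref{thm:main}\eqref{c1} to obtain $A_{\min}(\cL_{all},W)<\infty$, and then to extend Mohnke's argument from the exact to the symplectically aspherical setting using the hypothesis that $\pi_1(\Lambda)\to\pi_1(W)$ is trivial. The role of this $\pi_1$-triviality is to provide the cohomological substitute for exactness required by Mohnke's argument: although $\omega$ is not globally primitive on $W$, any relative 2-class in $\pi_2(W,L_\Lambda)$ can be capped off, after replacing its boundary loop by a disk in $W$, with an absolute class in $\pi_2(W)$ on which $\omega$ vanishes by symplectic asphericity, so the relative $\omega$-periods needed for Mohnke's action comparison are under control.

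The main obstacle is rigorously verifying this extension of Mohnke's argument. Concretely, the two tasks are (i) to check that the topological energy of pseudoholomorphic curves with boundary on $L_\Lambda$ is still bounded by the action of their Reeb-chord asymptotics together with $A_{\min}(\cL_{all},W)$, and (ii) to check that the bubbling analysis in the neck-stretching limit continues to exclude low-area disk and sphere bubbles. Both reduce to cohomological bookkeeping using symplectic asphericity of $W$ and the $\pi_1$-triviality of $L_\Lambda$, exactly the kind of strengthening foreshadowed by the footnote to Theorem \ref{thm:chord}; carrying out this bookkeeping is the only nontrivial work beyond directly quoting the earlier results.
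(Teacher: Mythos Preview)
Your treatment of cases \eqref{ch1} and \eqref{ch3} matches the paper. For case \eqref{ch4} your reasoning is essentially the paper's, but the phrase ``Lagrangian lift $L_\Lambda$ homotopy equivalent to $\Lambda$'' is inaccurate: the Mohnke Lagrangian is $\Phi(\Lambda\times\gamma)\simeq\Lambda\times S^1$, and the point is that $\pi_2(\Lambda\times S^1)=\pi_2(\Lambda)$, so the $\pi_2$-triviality passes to every Lagrangian in the family and Theorem~\ref{thm:local} applies uniformly.

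For case \eqref{ch2} you have misread what ``Mohnke's argument'' means in this paper, and this sends your plan badly off course. Look at the proof of Theorem~\ref{thm:chord} reproduced in \S 4: it contains \emph{no} pseudoholomorphic curves, no neck-stretching, no bubbling analysis. All Floer theory is already encapsulated in Theorem~\ref{thm:main}; once you have $A_{\min}(\cL_{all},W)<\infty$, the remaining step is purely topological. Assuming $\Lambda$ has no chord, one builds the family of Lagrangians $\Phi(\Lambda\times\gamma)$ and computes $A_{\min}(\Phi(\Lambda\times\gamma),W)$ directly. Your tasks (i) and (ii) are therefore not part of the argument at all.

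What actually needs to be checked for \eqref{ch2} is precisely the computation you hint at in your third paragraph and then abandon: that in the symplectically aspherical (non-exact) setting, $A_{\min}(\Phi(\Lambda\times\gamma),W)$ is still the area enclosed by $\gamma$. The loop $[\gamma]$ bounds a disk entirely inside the collar $\partial W'\times[\epsilon,1]$, where $\omega=\rd(r'\lambda')$, so that class contributes exactly $A(\gamma)$. By symplectic asphericity the $\omega$-area of a class in $\pi_2(W,L)$ depends only on its boundary in $\pi_1(L)\cong\pi_1(\Lambda)\times\Z$. The hypothesis $\pi_1(\Lambda)\to\pi_1(W)$ trivial is used to control the contribution of the $\pi_1(\Lambda)$-factor; the paper's footnote explains that without it a loop with nontrivial $\pi_1(\Lambda)$-component contractible only in $W$ could lower $A_{\min}$. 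That single computation is the entire content of the extension to the aspherical case --- one line in the paper, not a reworking of an SFT compactness argument.
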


\begin{remark}
	For exact domains $W$ with $SH^*(W)=0$, Ritter showed that the chord conjecture holds for exactly fillable Legendrians via showing the vanishing of the wrapped Floer cohomology of the filling \cite{ritter2013topological}. However, not every Legendrian is fillable and we do not have an effective criterion to prove a Legendrian is fillable. 
\end{remark}

We first explain the mechanism behind the proof of Theorem \ref{thm:main}. In the extreme case, if $L$ is an exact Lagrangian inside an exact domain $W$, in particular, then $A_{\min}(L,W)=\infty$. In this case, we have the Viterbo transfer map $SH^*(W)\to SH^*(T^*L)$ preserving various structures. Note that $SH^*(T^*L)$ is never zero and $SH^*(T^*L)$ does not carry a $k$-semi-dilation if $L$ is a $K(\pi,1)$ space. Using the Viterbo transfer map, if $SH^*(W)=0$ then $W$ has no exact Lagrangians, and if $W$ admits a $k$-semi-dilation, then $W$ has no exact $K(\pi,1)$ Lagrangians. If $L$ is not an exact Lagrangian, the Viterbo transfer map (without deformation) does not exist. However a truncated version of the Viterbo transfer map still exists, where the threshold of the truncation depends on $A_{\min}(L,W)$. Then Theorem \ref{thm:main} follows from the same argument as in the exact case above.

Inspired by the argument in \cite{mohnke2001holomorphic}, it is natural to look for the extreme case of the failure of finite minimal symplectic area in the symplectization, i.e.\ exact Lagrangians in a symplectization. By \cite{murphy2013closed}, there exist abundant exact Lagrangians in overtwisted contact manifolds of $\dim \ge 3$. Therefore the approach in \cite{mohnke2001holomorphic} is not applicable to all contact manifolds. On the other hand, every exact Lagrangian in the symplectization is displaceable by Hamiltonian diffeomorphisms \cite[Remark 4]{murphy2013closed}. In particular, there is no exact Lagrangian in the symplectization if the contact manifold is exactly fillable, for otherwise the Lagrangian Floer cohomology is well-defined and non-vanishing, contradicting that it is displaceable. It is an interesting question to understand if there is some quantitative shadow of this argument which leads to existence of Reeb chords.

Mohnke's construction was modified by Lisi \cite{MR2418284} to obtain existence of homoclinic orbits in the case of displaceable exact domains. Here the displaceable property is again used to obtain universal minimal symplectic area upper bounds by \cite{chekanov1998lagrangian}. With new minimal symplectic area bound given by Theorem \ref{thm:main}, we obtain the following result.

\begin{corollary}\label{cor:homo}
	Assume $(W,\lambda)$ is an exact domain such that $SH^*(W;R)=0$ for a commutative ring $R\ne 0$. Let $H:W\to \R$ be a Hamiltonian with $H(x_0)=0$ and $x_0$ a hyperbolic zero of the Hamiltonian vector field $X_H$. Suppose $H^{-1}(0)$ is compact and $H^{-1}(0)\backslash\{x_0\}$ is of restricted contact type, i.e.\ $\lambda(X_H)>0$ on $H^{-1}(0)\backslash\{x_0\}$, then there is an orbit of $X_H$ homoclinic to $x_0$.
\end{corollary}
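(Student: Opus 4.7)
The plan is to follow Lisi's strategy~\cite{MR2418284}, who established the same homoclinic existence statement when the exact domain $W$ is displaceable. His displaceability hypothesis enters only through a universal upper bound on the minimal symplectic area of Lagrangians in $W$ (via Chekanov~\cite{chekanov1998lagrangian}); Theorem~\ref{thm:main}(\ref{c1}) supplies exactly the same kind of bound under the weaker hypothesis $SH^*(W;R)=0$, so the task reduces to transcribing Lisi's Lagrangian construction. By a Moser-type linearization near the hyperbolic zero $x_0$, one identifies a neighborhood of $x_0$ in $(W,\lambda)$ with a standard model in which $X_H$ is linear and the local stable and unstable manifolds $W^s_{\mathrm{loc}},W^u_{\mathrm{loc}}$ are transverse Lagrangian subspaces through $x_0$. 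Because $H^{-1}(0)\setminus\{x_0\}$ is of restricted contact type, $X_H$ agrees there, up to reparametrization, with the Reeb field of $\lambda|_{H^{-1}(0)\setminus\{x_0\}}$, so saturating $W^s_{\mathrm{loc}},W^u_{\mathrm{loc}}$ under the flow produces global Lagrangian submanifolds $W^s(x_0),W^u(x_0)\subset W$. The existence of a homoclinic orbit is equivalent to $\bigl(W^s(x_0)\cap W^u(x_0)\bigr)\setminus\{x_0\}\neq\emptyset$, which I assume to fail for contradiction.

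Next I would build a sequence of closed Lagrangians $L_T\subset W$, indexed by a time parameter $T\to\infty$, whose minimal symplectic areas grow without bound. Fix a small contact-type sphere $\Sigma_\epsilon$ about $x_0$; the traces $\Lambda^u:=W^u(x_0)\cap\Sigma_\epsilon$ and $\Lambda^s:=W^s(x_0)\cap\Sigma_\epsilon$ are Legendrian spheres. Propagate $\Lambda^u$ by the Reeb flow on $H^{-1}(0)\setminus\{x_0\}$ for time $T$; the no-homoclinic assumption guarantees the resulting Lagrangian cylinder stays embedded and disjoint from $\Lambda^s$. Following~\cite{MR2418284}, I would splice this long cylinder to compact pieces of $W^u(x_0)$ and $W^s(x_0)$ and smooth at the saddle $x_0$ using the local linear model, obtaining a smoothly embedded closed Lagrangian $L_T\subset W$ together with an explicit relative disc class $u_T\in\pi_2(W,L_T)$. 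The positivity $\lambda(X_H)>0$ on $H^{-1}(0)\setminus\{x_0\}$ will then force $\int u_T^*d\lambda\geq cT$ for some constant $c>0$ independent of $T$, so $A_{\min}(L_T,W)\geq cT$.

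Combining this with Theorem~\ref{thm:main}(\ref{c1}), which gives $A_{\min}(\cL_{\mathrm{all}},W)<\infty$, yields the desired contradiction. The main obstacle lies entirely in the second step: one must verify smooth embeddedness of $L_T$ (this is where the no-homoclinic hypothesis is decisive), perform the saddle-smoothing while preserving the Lagrangian condition, and identify a genuine relative $\pi_2$-class whose symplectic area grows linearly in $T$. These are precisely the technical ingredients assembled by Lisi in the displaceable setting, and the claim is that they transplant verbatim once the area bound is furnished by Theorem~\ref{thm:main}(\ref{c1}) in place of Chekanov's displacement-energy inequality.
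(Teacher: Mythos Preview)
Your proposal is correct and follows exactly the same approach as the paper: invoke Lisi's argument \cite{MR2418284} verbatim and replace the displacement-energy input from \cite{chekanov1998lagrangian} by the minimal-symplectic-area bound of Theorem~\ref{thm:main}(\ref{c1}). One small wrinkle in your sketch: exhibiting a single disc class $u_T$ with $\int u_T^*\rd\lambda\ge cT$ does not by itself give $A_{\min}(L_T,W)\ge cT$ (that is an infimum, not a supremum); what is actually needed---and what Lisi arranges---is control on \emph{all} positive-area relative classes, so you should phrase that step as ``$A_{\min}(L_T,W)\to\infty$ by Lisi's construction'' rather than deducing it from one large disc.
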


We also find many new examples with $k$-semi-dilations, which provide more examples for Theorem \ref{thm:main} and Corollary \ref{cor:chord}.
\begin{theorem}\label{thm:SD}
	Let $X$ be a degree $m$ smooth hypersurface in $\CP^{n+1}$ for $m\le n$. Then for any holomorphic section $s$ of $\cO(k)$ for $k\le n+1-m$, the affine variety $X\backslash s^{-1}(0)$ admits a $k+m-2$-semi-dilation.
\end{theorem}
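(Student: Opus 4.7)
The plan is to adapt the strategy of \cite{zhou2019symplectic}, which handles the case $k=1$ (Milnor fibers of homogeneous polynomials), to general $k$ by counting lines on $X$ with prescribed tangency along the ample divisor $D:=X\cap s^{-1}(0)$.

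First I would set up the Liouville geometry: viewing $W:=X\setminus D$ as the complement of a smooth ample divisor in $X$, the restriction of the Fubini--Study form and the section $s$ equip $W$ with a Liouville structure whose Liouville vector field near $D$ is the radial field in the normal line bundle $N_{D/X}\cong\cO(k)|_D$, and the completion of $W$ is symplectomorphic to $(X\setminus D,\omega_{FS}|_{X\setminus D})$.

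Second I would invoke the framework of \cite{zhou2019symplectic} relating $SH^*(W;R)$ to the relative Gromov--Witten theory of the pair $(X,D)$: a holomorphic sphere $u:\CP^1\to X$ meeting $D$ at a single point with multiplicity $j$ contributes a cochain to $SC^*(W;R)$ at action filtration $j$, while the BV operator $\Delta$, realized geometrically by rotating the tangency marked point, decreases the tangency multiplicity by one. Producing a semi-dilation of a given order therefore amounts to exhibiting a structured collection of such cochains whose iterated BV images culminate in the unit of $SH^0(W;R)$.

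Third I would carry out this exhibition using lines on $X$. For $m\le n$, classical Debarre--de Jong results ensure that the Fano scheme $F(X)$ of lines on $X$ is non-empty of the expected complex dimension $2n-m-1$; a generic line meets $D$ in $k$ distinct transverse points, and the locus where these $k$ intersections coincide as a single point of order-$k$ tangency has complex codimension $k-1$. Under the hypothesis $k\le n+1-m$ this constrained moduli space has non-negative expected complex dimension $2n-m-k$, and transversality for it can be arranged by a generic choice of $s$ together with a relative perturbation compatible with the tangency profile. Evaluation at the tangency point yields a class $\alpha\in SH^*(W;R)$, and a direct Conley--Zehnder computation based on the normal bundle $\cO(k)|_D$ and the canonical class $K_X=\cO(m-n-2)|_X$ places $\alpha$ in the correct cohomological degree while identifying the total number of BV iterations needed to reach the unit as exactly $k+m-1$, so that $\alpha$ realizes a $(k+m-2)$-semi-dilation.

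The main technical obstacle is establishing transversality and gluing for the stratum of lines with a single high-order tangency to $D$, since such tangency conditions are not cut out transversally by the integrable complex structure; this requires a relative perturbation scheme preserving the tangency profile, in the style of Cieliebak--Mohnke or Ionel--Parker. The hypothesis $k\le n+1-m$ additionally rules out unwanted bubbling into lower-degree components carrying residual tangency, so that the only contributions to the defining identity of the semi-dilation come from the moduli space of lines described above.
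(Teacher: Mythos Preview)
There are two gaps. The minor one: the theorem allows an arbitrary section $s$, so $D=X\cap s^{-1}(0)$ need not be smooth. The paper first reduces to the smooth case via an exact embedding $X\setminus s^{-1}(0)\hookrightarrow X\setminus S^{-1}(0)$ with $S^{-1}(0)$ smooth, and then invokes the functoriality of semi-dilations under exact inclusions. You have silently assumed $D$ smooth throughout.

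The serious gap is that your proposed mechanism does not work. You assert that the BV operator ``decreases the tangency multiplicity by one'' and that iterating it $k+m-1$ times reaches the unit. In the Diogo--Lisi model for the divisor complement this is false: the circle fibers of multiplicities $1,\ldots,k$ lie in pairwise distinct homology classes of $\check X$ (via the Gysin sequence of the Boothby--Wang bundle over $D$), so there is \emph{no} Floer differential, and in particular no BV contribution, connecting different multiplicities in the relevant action window; one has $\delta^1(\check p_i)=i\hat p_i$ at the \emph{same} multiplicity $i$. The paper instead works entirely at the top multiplicity $k$. Choosing critical points $p^0,\ldots,p^{m+k-2}$ of a perfect Morse function on $D$ representing the successive hyperplane slices $[D]\cap^{n+1-m-k+i}[H]$, the element $\eta=\check p^0_k+\sum_{i=1}^{m+k-2}(-1)^i u^{-i}\check p^i_k$ is closed in the positive $S^1$-equivariant complex because the $\delta^0_+$-term (cup product with $c_1(\cO(k))$ on $D$) and the $\delta^1_+$-term cancel step by step; then $\pi_0\circ\delta(\eta)=k\cdot\mathrm{GW}^{X,D}_{0,1,(k),A}([pt],[D]\cap^{n+1-m-k}[H])$, which is nonzero by Gathmann, and $u^{m+k-1}\eta=0$ tautologically. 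Thus the order $m+k-2$ records the length of a $c_1$-cup-product staircase inside $H^*(D)$, not a tangency-lowering process; your picture would give a staircase of length $k$, and your appeal to a Conley--Zehnder computation does not explain where $m$ enters. Your instinct that order-$k$ tangent lines are the relevant curves is correct---that is exactly the relative Gromov--Witten input---but the $S^1$-equivariant packaging is different from what you describe, and the transversality is supplied by the monotonicity hypothesis $k\le n+1-m$ within the Diogo--Lisi framework rather than by an ad hoc relative perturbation scheme.
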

\subsection*{Acknowledgements}
The author would like to thank Laurent C\^ot\'e for explaining the non-existence of exact Lagrangians in the symplectization of an exact fillable contact manifold and pointing out \cite{MR2418284, murphy2013closed}. The author is also in debt to the referee for pointing out a mistake in an earlier draft. The author is supported by the National Science Foundation under Grant No. DMS-1926686. It is a great pleasure to acknowledge the Institute for Advanced Study for its warm hospitality.

\section{Symplectic cohomology}
In this section, we review briefly the construction of symplectic cohomology for exact and symplectically aspherical domains and the Viterbo transfer map following \cite[\S 3]{cieliebak2018symplectic}. We will only recall basics of symplectic cohomology to set up notations and relevant structures for our main results. We refer readers to \cite{cieliebak2018symplectic,ritter2013topological,seidel2006biased} for a more complete treatment of the subject.

\subsection{Symplectic cohomology}
\subsubsection{Symplectic cohomology for exact domains}
Let $(W,\lambda)$ be an exact filling and $(\widehat{W},\widehat{\lambda})=W\cup \partial W \times (1,\infty)$ be the completion. Let $H$ be a time-dependent Hamiltonian on the completion $(\widehat{W},\widehat{\lambda})$, then the symplectic action for an orbit $\gamma$ is 
\begin{equation}\label{eqn:action}
\cA_H(\gamma)=-\int \gamma^*\widehat{\lambda}+\int_{S^1} (H\circ \gamma)\rd t,
\end{equation}
where $\rd \widehat{\lambda}(\cdot, X_{H})=\rd H$. We say an almost complex structure $J$ is cylindrically convex near $\partial W \times \{r_0\}$ iff near the hypersurface $r=r_0$ we have that $\widehat{\lambda}\circ J =\rd r$. We will consider a Hamiltonian $H$, which is a $C^2$ small perturbation (for the more precise meaning, see \eqref{ii} below) to the Hamiltonian that is $0$ on $W$ and linear with slope $a$ on $\partial W \times (1,\infty)$, such that $a$ is not a period of Reeb orbits of $R_{\lambda}$ on $\partial W\times \{1\}$. We may assume the Hamiltonian is non-degenerate, then the periodic orbits consist of the following.
\begin{enumerate}[(i)]
	\item\label{i} Constant orbits on $W$ with $\cA_H\approx 0$.
	\item\label{ii} Non-constant orbits near Reeb orbits of $R_{\lambda}$ on $\partial W \times \{1\}$, with action close to the negative period of the Reeb orbits. In particular, we have $\cA_H\in (-a,0)$.  To see this, note that our $H$ is an $S^1$-dependent $C^2$ small perturbation to an autonomous Hamiltonian $h(r)$ with $h'(r)=a$ for $r>1+\epsilon$ with $\epsilon$ small and $h(r)=0$ for $r\le 1$. The non-constant orbits of $h(r)$ are in $S^1$ families like $(\xi(h'(r_0)t),r_0)$, where $\xi$ is a Reeb orbit of $(\partial W,\lambda|_{\partial W})$ with the period of $\xi$ is $h'(r_0)$ for $1<r_0<1+\epsilon$. Then the $S^1$ family comes from the reparameterization of the Reeb orbit $\xi$. Therefore the symplectic action of such orbit is
	$$-h'(r_0)r_0+h(r_0).$$
	It is clear when $\epsilon\ll 1$, we have that the symplectic action is approximately the negative period of $\xi$, which, in particular, is in $(-a,0)$. Then the $C^2$-small perturbation to $h(r)$ in \cite[Lemma 3.3]{MR2475400} will break the $S^1$ family orbits into two non-degenerate orbits with symplectic action arbitrarily close to the original $S^1$ family.
\end{enumerate} 
After fixing an $S^1$-dependent compatible almost complex structure $J$ that is cylindrically convex near a slice (i.e.\ a hypersurface $r=r_0$) where the Hamiltonian is linear with slope $a$, we can consider the compactified moduli space of Floer cylinders, i.e.\ solutions to $\partial_s u+J(\partial_t u-X_{H})=0$ modulo the $\R$ translation and asymptotic to two Hamiltonian orbits
$$\cM_{x,y}=\overline{\left\{u:\R_s\times S^1_t\to \widehat{W}\left|\partial_s u+J(\partial_t u-X_{H})=0, \lim_{s\to \infty} u(s,\cdot)=x, \lim_{s\to -\infty} u(s,\cdot)=y \right.\right\}/\R}.$$
With a generic choice of $J$, the count of rigid Floer cylinders defines a cochain complex $C(H)$, which is a free $R$ module generated by Hamiltonian orbits,  for any commutative ring $R$ (our default setting is $R=\Z$). The differential $\delta^0$ is defined as
$$\delta^0(x)=\sum_{y,\dim \cM_{x,y}=0} \left(\# \cM_{x,y}\right) y.$$
The orbits of type \eqref{i} form a subcomplex $C_0(H)$, whose cohomology is $H^*(W)$. The orbits of type \eqref{ii} form a quotient complex $C_+(H)$. The cochain complexes are graded by $n-\mu_{CZ}$, which is in general a $\Z/2\Z$ grading unless $c_1(W)=0$. Given two Hamiltonians $H_a,H_b$ with slopes $a<b$, we can consider a non-increasing homotopy of Hamiltonians $H_s$ from $H_b$ to $H_a$, i.e.\ $H_s=H_b$ for $s\ll0$ and $H_s=H_a$ for $s\gg 0$. Then the count of rigid solutions to the parameterized Floer's equation $\partial_s u+J(\partial_t u-X_{H_s})=0$ defines a continuation map $C(H_a)\to C(H_b)$, which is compatible with splitting into zero and positive complexes. Then the (positive) symplectic cohomology of $W$ is defined as
$$SH^*(W):=\lim_{a\to \infty} H^*(C(H_a)),\quad SH_+^*(W):=\lim_{a\to \infty} H^*(C_+(H_a)),$$
which fit into a tautological exact sequence,
$$\ldots \to H^*(W)\to SH^*(W)\to SH^*_+(W)\to H^{*+1}(W)\to \ldots $$
We define the filtered symplectic cohomology $SH^*_{<a}(W)$ by $H^*(C(H_a))$ and $SH^*_{+,<a}(W)$ by $H^*(C_+(H_a))$, which are independent of $H_a$ as long as $a$ is not a period of Reeb orbits on $\partial W$, see \cite[Proposition 2.8]{zhou2020mathbb}. We use $SH^{[0],*}(W),SH^{[0],*}_+(W)$ to denote the cohomology generated by \emph{contractible} orbits.

\subsubsection{Symplectic cohomology for symplectically aspherical domains} Assume $(W,\omega)$ is a strong domain\footnote{A strong domain $(W,\omega)$ is a compact symplectic manifold with boundary, such that $\omega=\rd\lambda$ near $\partial W$ and the Liouville vector field $X$ defined by $\iota_X \omega =\lambda$ near $\partial W$ points out along $\partial W$.} that is symplectically aspherical, i.e.\ $\omega|_{\pi_2(W)}=0$. Then the symplectic action for a Hamiltonian $H$ and a \emph{contractible} orbit $\gamma$ is now 
\begin{equation}\label{eqn:action'}
\cA_H(\gamma)=-\int_D u^*\widehat{\omega}+\int_{S^1} (H\circ \gamma) \rd t,
\end{equation}
where $u:D\to \widehat{W}$ is an extension of $\gamma$. \eqref{eqn:action'} is independent of $u$ since $\omega|_{\pi_2(W)}=0$. In this case, we still have a cochain complex $C^{[0]}(H_a)$ for a Hamiltonian with slope $a$ and a continuation map $C^{[0]}(H_a)\to C^{[0]}(H_b)$ for $a<b$. However, it is not longer clear from the symplectic action perspective that $C^{[0]}(H_a)$ splits into $C_0$ and $C_+$\footnote{The splitting, in particular, the positive symplectic cohomology is still defined by the asymptotic behaviour lemma \cite[Lemma 2.3]{cieliebak2018symplectic}.}. Nevertheless, we can still define symplectic cohomology $SH^{[0],*}(W):=\lim_{a\to \infty} H^{*}(C^{[0]}(H_a))$ and filtered symplectic cohomology $SH^{[0],*}_{< a}(W):= H^*(C^{[0]}(H_a))$ and a continuation map $H^{*}(W)=SH^{[0],*}_{<\epsilon}(W)\to SH^{[0],*}_{<a}(W)\to SH^{[0],*}(W)$ for $\epsilon\ll 1$, which in the exact case, is the map $H^*(W)\to SH^{[0],*}_{<a}(W)\to SH^{[0],*}(W)$ in the tautological exact sequence.

\subsection{The Viterbo transfer map \cite[\S 5]{cieliebak2018symplectic}}\label{ss22}
Let $(V,\lambda_V)\subset (W,\lambda_W)$ be an exact subdomain, i.e.\ $\lambda_W|_V=\lambda_V$, then there exists $\epsilon>0$, such that $V_{\epsilon}:=V\cup \partial V \times (1,1+\epsilon]\subset W$ with $\lambda_W|_{V_{\epsilon}}=\widehat{\lambda}_V$. Then we can consider a Hamiltonian $H$ on $\widehat{W}$ as a $C^2$-small non-degenerate perturbation to the following.
\begin{enumerate}
	\item $H$ is $0$ on $V$.
	\item $H$ is linear with slope $B$ on $\partial V \times [1,1+\epsilon]$, such that $B$ is not the period of a Reeb orbit on $\partial V$.
	\item $H$ is $B\epsilon$ on $W\backslash V_{\epsilon}$.
	\item $H$ is linear with slope $A\le B\epsilon$ on $\partial W \times [1,\infty)$, such that $A$ is not the period of a Reeb orbit on $\partial W$.  
\end{enumerate}
Then there are five classes of periodic orbits of $X_H$.
\begin{enumerate}[(I)]
	\item\label{I} Constant orbits on $V$ with $\cA_H\approx 0$.
	\item\label{II} Non-constant periodic orbits near $\partial V$ with $\cA_H \in (-B,0)$.
	\item\label{III} Non-constant periodic orbits near $\partial V \times \{1+\epsilon\}$ with action $\cA_H\in (B\epsilon-(1+\epsilon)B, B\epsilon)=(-B,B\epsilon)$. To see the action region, it is similar to \eqref{ii} after \eqref{eqn:action}. Those orbits are close to Reeb orbits of $(\partial W, \lambda|_{\partial W})$, but placed near $r=1+\epsilon$. Therefore $-\int \gamma^*\widehat{\lambda}$ in \eqref{eqn:action} is close to $(1+\epsilon)$  times (as $\widehat{\lambda}|_{r=1+\epsilon}=(1+\epsilon)\lambda|_{\partial W}$) the period of the Reeb orbits and $\int_{S^1} (H\circ \gamma) \rd t$ is close to $B\epsilon$. Hence the claim follows.
	\item\label{IV} Constant orbits on $W\backslash V_{\epsilon}$ with $\cA_H\approx B\epsilon$.
	\item\label{V} Non-constant periodic orbits near $\partial W$ with $\cA_H\in (B\epsilon-A, B\epsilon)$
\end{enumerate}
In particular, when $B\epsilon \ge A$, the quotient complex generated by orbits with non-positive action are generated by type \eqref{I}, \eqref{II} orbits along with some of the type \eqref{III} orbits. However, there is no Floer cylinder from a type \eqref{III} orbit to a type \eqref{I} or \eqref{II} orbit \cite[Figure 6]{cieliebak2018symplectic}. This follows from the asymptotic behavior lemma \cite[Lemma 2.3]{cieliebak2018symplectic} and the integrated maximum principle \cite[Lemma 2.2]{cieliebak2018symplectic}. Therefore orbits of the form \eqref{I}, \eqref{II} form a quotient complex when $B\epsilon\ge A$. Let $H_V$ denote the Hamiltonian on $\widehat{V}$ which is the linear extension of the truncation of $H$ on $V_{\epsilon}$. Then by \cite[Lemma 2.2]{cieliebak2018symplectic}, the quotient complex is identified with $C^*(H_V)$. Those two applications of the integrated maximum principle are where exactness of the cobordism $W\backslash V$ is crucial. Next we consider a Hamiltonian $H_W$ on $\widehat{W}$ which is a $C^2$ small perturbation to the function that is zero on $W$ and linear with slope $A$ on $\partial W\times (1,\infty)$. Then $H_W\le H$ and we can find non-increasing homotopy from $H$ to $H_W$, which defines a continuation map. Therefore we have a map 
$$C^*(H_W)\to C^*(H) \to C^*(H_V).$$
Since the continuation map increases the symplectic action, the above map respects the splitting into $C_0,C_+$. Taking direct limit for $B$ yields the Viterbo transfer map which is compatible with the tautological exact sequence,
$$
\xymatrix{\ldots \ar[r] & H^*(W) \ar[r] \ar[d] & SH^*_{<A}(W) \ar[r] \ar[d] & SH_{+,<A}^*(W) \ar[r]\ar[d] & H^{*+1}(W) \ar[r]\ar[d] & \ldots \\
	\ldots \ar[r] & H^*(V) \ar[r] & SH^*(V) \ar[r] & SH_+^*(V) \ar[r] & H^{*+1}(V) \ar[r] &  \ldots
 }
$$
If we also take the direct limit for $A$, then we get the Viterbo transfer map for the full symplectic cohomology.

Moreover, $SH^*(W)$ is a unital ring by the pair-of-pants construction \cite[\S 6]{ritter2013topological}. The continuation map $H^*(W)\to SH^*(W)$ as well as the (full) Viterbo transfer $SH^*(W)\to SH^*(V)$ are unital ring maps \cite[\S 9, \S 15]{ritter2013topological}. If we take  $SH^{[0],*}(W)$, then the Viterbo transfer map maps to the part of $SH^*(V)$ generated by orbits that are \emph{contractible in $W$}, which can then be included into the full symplectic cohomology generated by all orbits.

\subsection{$S^1$-equivariant symplectic cohomology}
The $S^1$ action on the free loop space gives rise to an $S^1$-equivariant analogue for symplectic cohomology \cite{bourgeois2017s,seidel2006biased}. More precisely, one can consider Hamiltonians with slope $a$ that are parameterized  over $ES^1=S^{\infty}$ (in the actual construction, we need to approximate $ES^1$ by $S^{2m+1}$). In particular, by counting parameterized Floer trajectories, we have maps $\delta^1,\delta^2,\ldots$ defined on the regular symplectic cochain complex, such that $\sum_{i+j=k} \delta^i\circ \delta^j=0$ for $k\ge 0$. Then $\delta^{S^1}=\sum_{k=0}^\infty u^k\delta^k$ defines a differential on $C(H)\otimes R[u,u^{-1}]/u$,  whose cohomology is denoted by $SH^*_{S^1,<a}(W)$. The full $S^1$-equivariant symplectic cohomology $SH^*_{S^1}(W)$ is the direct limit with respect to continuation maps for $a\to \infty$, see \cite{bourgeois2017s,gutt2017positive,MR4039816,zhou2019symplectic} for details. We also have the splitting into positive and zero parts, which induces the following long exact sequence of  $H^*(BS^1)=R[u]$-modules,
\begin{equation}\label{eqn:connect}
\ldots \to H^*_{S^1}(W)\to SH^*_{S^1}(W) \to SH^*_{+,S^1}(W)\stackrel{\delta}{\to} H^{*+1}_{S^1}(W)\to \ldots,
\end{equation}
where $H^*_{S^1}(W)=H^*(W)\otimes_{\Z} R[u,u^{-1}]/u$. 
\begin{definition}\label{def:SD}
	Let $\pi_0$ denote the projection $H^*_{S^1}(W)\to H^0(W)$,  we say $W$ carries a $k$-semi-dilation iff there exists $x\in SH^*_{+,S^1}(W)$, such that $\pi_0\circ \delta(x)=1$ and $u^{k+1}(x)=0$.
\end{definition}
It is clear from definition, that $k$-semi-dilations only depend on the part $SH^{[0],*}_{+,S^1}(W)$ generated by \emph{contractible} orbits.

\begin{remark}
	A symplectic dilation was introduced by Seidel-Solomon \cite{seidel2012symplectic} as an element $x\in SH^1(W)$, such that $\Delta(x)=1$, where $\Delta$ is the BV operator. In \cite{zhou2019symplectic}, we introduced the concept of $k$-dilations, an exact domain $W$ admits a $k$-dilation iff there exists $x\in SH^*_{+,S^1}(W)$, such that $\delta(x)=1$ and $u^{k+1}(x)=0$. Then the existence of a $1$-dilation is equivalent to the existence of a symplectic dilation \cite[Proposition 3.8]{zhou2019symplectic}, and the existence of a $k$-dilation for some $k$ is equivalent to the existence of a cyclic dilation for $h=1$ introduced in \cite{li2019exact}, see \cite[Remark 1.1]{zhou2019symplectic}. The existence of $k$-dilations implies the existence of $k$-semi-dilations \cite[Proposition 3.8]{zhou2019symplectic}.
\end{remark}
Let $V\subset W$ be an exact subdomain. By the family version of the construction in \S \ref{ss22}, we have the following Viterbo transfer map which respects the $R[u]$-module structure, see  \cite{gutt2017positive}.
$$
\xymatrix{\ldots \ar[r] & H_{S^1}^*(W) \ar[r] \ar[d] & SH^*_{S^1,<a}(W) \ar[r] \ar[d] & SH_{+,S^1,<a}^*(W) \ar[r]\ar[d] & H^{*+1}_{S^1}(W) \ar[r]\ar[d] & \ldots \\
	\ldots \ar[r] & H^*_{S^1}(V) \ar[r] & SH^*_{S^1}(V) \ar[r] & SH_{+,S^1}^*(V) \ar[r] & H^{*+1}_{S^1}(V) \ar[r] &  \ldots
}
$$
In particular, if $W$ carries a $k$-semi-dilation, then by the commutativity of rightmost square as $\R[u]$-modules, we have that $V$ also carries a $k$ semi-dilation.
\begin{example}
	The following examples have a $k$-semi-dilation for some $k$.
	\begin{enumerate}
		\item The cotangent bundle $T^*Q$ of a simply connected closed manifold $Q$ \cite[Proposition 5.1]{zhou2019symplectic} and more generally, plumbings of $\{T^*Q_i\}$ with respect to any tree for simply connected closed spin manifold $Q_i$ such that $\dim Q_i\ge 3$ \cite[Proposition 6.2]{li2019exact};
		\item The Brieskorn variety $x_0^n+\ldots+x_n^n=1$ \cite[Theorem A]{zhou2019symplectic};
		\item Lefschetz fibration whose fiber admits a $k$-semi-dilation \cite[Proposition 3.31]{zhou2019symplectic};
		\item Products of exact domains if one of them admits a $k$-semi-dilation, \cite[Proposition 3.30]{zhou2019symplectic};
		\item Flexible surgery does not affect the existence of $k$-semi-dilations under mild assumptions \cite[Proposition 3.32]{zhou2019symplectic}.
	\end{enumerate}
Using the functorial property of $k$-semi-dilations and Lefschetz fibrations, there are many more Brieskorn varieties with $k$-semi-dilations, when the Brieskorn singularity is canonical, see \cite[\S 5]{zhou2019symplectic}.
\end{example}

\begin{example}\label{ex:counter}
	Let $L$ be a smooth $K(\pi,1)$ space, then $T^*L$ does not admit a $k$-semi-dilation. This is because $SH_{S^1}(T^*L)$ is the equivariant homology of the free loop space possibly twisted by a local system \cite{abbondandolo2006floer, abouzaid2013symplectic, salamon2006floer,viterbo2018functors}, hence $SH^*_{+,S^1}(T^*L)$ is generated by non-contractible loops as $L$ is a $K(\pi,1)$ space. Therefore $T^*L$ can not support a $k$-semi-dilation for any $k$, as the class $x\in SH^*_{+,S^1}(W)$ for a $k$-semi-dilation is represented by contractible orbits.
\end{example}

\section{Truncated Viterbo transfer}
Let $L\subset W$ be a Lagrangian. We say $L$ is exact if $\lambda$ is exact on $L$. In this case, the complement of a Weinstein neighborhood of $L$ is an exact cobordism.  In general, $\lambda|_L$ is only a closed class, and the complement is a strong cobordism and the Viterbo transfer in \S \ref{ss22} fails. In this case, \eqref{eqn:Amin} becomes
$$A_{\min}(L,W)=\inf\left\{\int \gamma^*\lambda\left| \gamma \in \pi_1(L) \text{ that is contractible in } W, \int \gamma^*\lambda>0\right. \right\}.$$

\begin{proposition}\label{prop:Viterbo}
	Let $(W,\lambda)$ be an exact domain and $L$ a Lagrangian such that $A_{\min}(L,W)\ge 2A>0$, then we have a Viterbo transfer  $SH^*_{<A}(W) \to SH^*(T^*L)$ such that the following exact sequences commute
	$$
	\xymatrix{ \ldots \ar[r]& H^*(W) \ar[r] \ar[d] & SH^{[0],*}_{<A}(W) \ar[r] \ar[d] & SH^{[0],*}_{+,<A}(W) \ar[r] \ar[d] & H^{*+1}(W) \ar[r] \ar[d] & \ldots \\
		\ldots\ar[r] & H^*(T^*L) \ar[r] & SH^*(T^*L) \ar[r] & SH_{+}^*(T^*L) \ar[r] & H^{*+1}(T^*L) \ar[r] & \ldots }
	$$
\end{proposition}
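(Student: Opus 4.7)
The plan is to mimic the Viterbo transfer construction of \S \ref{ss22} with the exact subdomain $V$ replaced by a Weinstein tubular neighborhood $U$ of $L$; the hypothesis $A_{\min}(L,W)\ge 2A$ is designed to overcome the failure of $U \hookrightarrow W$ to be an exact embedding. Via the Lagrangian Weinstein theorem, fix a symplectic identification $U \cong D^*_\epsilon L$ under which $\lambda_W|_U = \lambda_{can} + \pi^*\beta$, where $\beta := \lambda_W|_L$ is a closed $1$-form on $L$ (any exact part is absorbed into the choice of embedding). Thus $U$ is a Weinstein subdomain but \emph{not} an exact subdomain of $W$.

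First I would set up, as in \S \ref{ss22}, a Hamiltonian $H$ on $\widehat W$ with the usual four-part profile: vanishing on $U$, linear of slope $B$ on the collar $\partial U \times [1, 1+\epsilon]$, constant equal to $B\epsilon$ on $W \setminus U_\epsilon$, and linear of slope $A' < A$ on $\partial W \times [1, \infty)$, with $B\epsilon \ge A'$. Since $\lambda_W$ is globally exact, the action formula \eqref{eqn:action} makes sense for all orbits and the classification into types (I)--(V) with the action ranges of \S \ref{ss22} carries over verbatim. In particular every type (I) and (II) orbit sits in the truncated complex $C^{[0],*}_{<A}(H)$.

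The heart of the argument is to show that types (I), (II) span a subcomplex of $C^{[0],*}_{<A}(H)$, i.e.\ there is no Floer differential from a type (I) or (II) orbit to a type (III), (IV), or (V) orbit of action $<A$. In the exact-subdomain case this follows from the integrated maximum principle at the slice $\partial U \times \{1+\epsilon\}$, whose proof uses the local Liouville primitive $r\lambda_W|_{\partial U} = r(\lambda_{can} + \pi^*\beta)|_{\partial U}$. The $\pi^*\beta$-correction produces, through Stokes' theorem, a boundary error essentially equal to $(1+\epsilon)\int_\gamma \beta$ for a loop $\gamma$ on $L$ obtained by projecting to $L$ the trace of the Floer cylinder on the slice. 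Restricting to $C^{[0],*}$ forces both orbit asymptotes to be contractible in $W$; combining the cylinder with cappings of both asymptotes in $W$ and the fibre disks over $\gamma$ produces a topological disk in $(W,L)$ whose class in $\pi_2(W,L)$ has symplectic area exactly $\int_\gamma \beta$. Hence $|\int_\gamma \beta| \le A_{\min}(L,W)$. The truncation enforces $A \le A_{\min}(L,W)/2$, and the main term in the max-principle inequality grows linearly in the slope $B$; choosing $B$ large enough then makes the main term strictly dominate the correction, restoring the contradiction and establishing the subcomplex property.

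Once the subcomplex property is established, passing to the direct limit $B \to \infty$ identifies the quotient $C^{[0],*}_{<A}(H) / \langle\text{types (III)--(V)}\rangle$ with the symplectic cochain complex of $(\widehat U, \lambda_W|_U)$. Since $\lambda_W|_U$ and $\lambda_{can}$ on $T^*L$ differ by a bounded pullback of a closed $1$-form from the compact $L$, a standard continuation/deformation-of-Liouville-structure argument yields an isomorphism $SH^*(\widehat U, \lambda_W|_U) \cong SH^*(T^*L)$. Composing with the continuation map from a small Hamiltonian $H_W$ on $\widehat W$ gives the desired transfer $SH^{[0],*}_{<A}(W) \to SH^*(T^*L)$. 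Compatibility with the tautological long exact sequence is inherited from the fact that the entire construction (continuation and projection to the quotient) respects the splitting into constant-orbit and positive-orbit subcomplexes. The main obstacle is the third paragraph: the topological bookkeeping that identifies the loop $\gamma$ emerging from the max-principle correction with the boundary of a capping disk in $(W,L)$ of area exactly $\int_\gamma\beta$, which is what makes the $A_{\min}$ hypothesis directly applicable.
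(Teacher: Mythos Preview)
There is a genuine gap, and the strategy diverges from how the hypothesis $A_{\min}(L,W)\ge 2A$ is actually used.

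First, the action ranges of \S\ref{ss22} do not carry over verbatim: for a type \eqref{II} or \eqref{III} orbit $\gamma$ lying in the Weinstein neighborhood, the $\lambda_W$-action differs from the $\lambda_{std}$-action by $-\int\gamma^*\beta$, so the true windows are $(-B,0)+\Z\Gamma$ and $(-B,B\epsilon)+\Z\Gamma$ with $\Gamma=A_{\min}(L,W)$. Second, your key inequality ``$|\int_\gamma\beta|\le A_{\min}(L,W)$'' points the wrong way --- $A_{\min}$ is an \emph{infimum} of positive areas, so a nonzero $\int_\gamma\beta$ satisfies $|\int_\gamma\beta|\ge A_{\min}$ and is unbounded above. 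There is also no ``main term growing linearly in $B$'' in the integrated maximum principle: after the standard cancellations that inequality reads $0<E(u|_S)\le\int_{\partial S}u^*\beta$, and the right-hand side has nothing to do with $B$. (A separate slip: you argue that types \eqref{I},\eqref{II} form a subcomplex but then quotient by types \eqref{III}--\eqref{V}; these are incompatible, and it is the quotient that is needed.)

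The paper's argument instead fixes the inner slope equal to $A$, rather than sending $B\to\infty$. Then type \eqref{II} and \eqref{III} orbits have action in $(-A,0)+\Z\Gamma$ and $(-A,A)+\Z\Gamma$ respectively, and the hypothesis $\Gamma\ge 2A$ forces any such orbit landing in the window $(-A,0)$ --- which is exactly the target of the continuation map from $H_W$ --- to satisfy $\int\gamma^*\beta=0$. For orbits with vanishing $\beta$-period, the correction term $\int_{\partial S}u^*\beta$ vanishes for homological reasons, and the integrated maximum principle goes through exactly as in the exact case. This exhibits types \eqref{I},\eqref{II} as a further \emph{quotient} complex, identified with the subcomplex $C^*_\beta$ of the Floer complex on $T^*L$ generated by orbits annihilated by $\beta$. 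No direct limit in $B$ is taken.
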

\begin{proof}
	Fixing a metric on $L$, we have the unit cotangent bundle $(D^*L,\lambda_{std})$. We define $(D^*_{\epsilon}L,\lambda_{std})=D^*L\backslash (\partial D^*L\times (\epsilon,1])$. By the Weinstein neighborhood theorem, we have a symplectic embedding $(D_{\epsilon}^*L,\rd \lambda_{std})$ into $W$. WLOG,  we can assume there exits a closed one form $\beta \in \Omega^1(D^*_{\epsilon}L)$, which is a pullback from $\Omega^1(L)$, such that $\lambda=\lambda_{std}+\beta$ on $D^*_{\epsilon}L$. Then we consider Hamiltonians $H$ on $\widehat{W}$ in the definition of the Viterbo transfer that are $C^2$ small perturbations to functions with the following properties,
	\begin{enumerate}
		\item $0$ on $D^*_{\frac{\epsilon}{2}}L$,
		\item linear with slope $\frac{2A}{\epsilon}$ with respect to $r\in (0,\epsilon]$ on $D^*_{\epsilon}L\backslash D^*_{\frac{\epsilon}{2}}L$,\footnote{The standard $r_0$ coordinate for the completion $D^*_{\frac{\epsilon}{2}}L\cup \partial D^*_{\frac{\epsilon}{2}}L\times (1,\infty)_{r_0}=T^*L$ equals to $\frac{2}{\epsilon}r$ on $D^*_{\epsilon}L\backslash D^*_{\frac{\epsilon}{2}}L$. Therefore the slope w.r.t. the $r_0$ coordinate is $A$. }
		\item $A$ on $W\backslash D^*_{\epsilon}L$,
		\item linear with slope $A$ on $\partial W \times (1,\infty)$.
	\end{enumerate} 
     We write  $\Gamma:=A_{\min}(L,W)$, which by assumption is $\ge 2A$. As in \S \ref{ss22}, we have the following five types of orbits.
     \begin{enumerate}[(I)]
     	\item Constant orbits on  $D^*_{\frac{\epsilon}{2}}L$ with $\cA_H\approx 0$.
     	\item Non-constant periodic orbits near $\partial D^*_{\frac{\epsilon}{2}}L$. The symplectic action using $\lambda_{std}$ for those type \eqref{II} orbits is the same as before, i.e.\ in the region $(-A,0)$.\footnote{Translating back to the notation in \S \ref{ss22}, we have $\epsilon'=1,A'=A,B'=A$, where $\epsilon',A',B'$ are $\epsilon,A,B$ in \S \ref{ss22}. This also explains why the symplectic action using $\lambda_{std}$ in the next case is in $(-A,A)$.}  But the actual symplectic action using $\lambda$ differs by $\int \gamma^*\beta$ for the orbit $\gamma$. In particular, those orbits have action in $(-A,0)+\Z \Gamma$.
     	\item Non-constant periodic orbits near $\partial D^*_{\epsilon}L$.  The actual symplectic action using $\lambda$ again differs from that of type \eqref{III} orbits in \S \ref{ss22} by $\int \gamma^*\beta$ for the orbit $\gamma$. Hence the symplectic action is in  $(-A,A)+\Z \Gamma$. 
     	\item  Constant orbits on $W\backslash D^*_{\epsilon}L$, with $\cA_H\approx A$, as $H\approx A$ there.
     	\item Non-constant periodic orbits near $\partial W$ with $\cA_H\in (0, A)$.
     \end{enumerate}
     Then we consider the Hamiltonian $H_W$ on $\widehat{W}$, which is admissible with slope $A$. Since the symplectic action of orbits of $X_{H_W}$ is in $(-A,0)$ and the continuation map induced from the non-increasing homotopy from $H$ to $H_W$ increases symplectic action, we get a cochain map from $C^{[0]}(H_W)$ to the complex generated by orbits of $X_H$ that are contractible in $W$ with action in $(-A,0)$. 
     Since $\Gamma\ge 2A$, any type \eqref{II}, \eqref{III} orbit $\gamma$ with action in $(-A,0)$ has the property that $\int \gamma^*\beta=0$.  
     \begin{claim}
     	 Type \eqref{I} and \eqref{II} orbits with action in $(-A,0)$ generate a further quotient complex, which is isomorphic to $C^*_{\beta}(H_{D^*_{\epsilon}L})$, i.e.\ the subcomplex generated by orbits annihilated by $\beta$, where $H_{D^*_{\epsilon}L}\in C^{\infty}(S^1\times \widehat{D^*_{\epsilon}L})$ is the linear extension of $H|_{S^1\times D^*_{\frac{3\epsilon}{4}}L}$ to $S^1\times T^*L$.
     \end{claim}
     \begin{proof}[Proof of the claim]
     	 Let $x,y$ be two such orbits. We can require the almost complex structure to be cylindrically convex for $\lambda_{std}$ on near $\partial D^*_{\frac{3}{4}\epsilon}L$\footnote{This does not contradict the ``genericity" of $J$, as it is sufficient to perturb $J$ near orbits to achieve transversality.}, i.e.\ $\lambda_{std}\circ J=\rd \frac{2}{\epsilon}r$, we claim if $u\in \cM_{x,y}$ then $\Ima u \subset D^*_{\frac{3}{4}\epsilon}L$.  Assume otherwise that $u$ escapes $D_{\frac{3}{4}\epsilon}^*L$, and $S=u^{-1}(\widehat{W}\backslash D_{\frac{3}{4}\epsilon}^*L)$. WLOG, we may assume  $u\pitchfork \partial D^*_{\frac{3}{4}\epsilon}L$, hence $S$ is a compact surface with boundary equipped with complex structure $j$, then by homology reasons $u|_{\partial S}$ represents a class annihilated by $\beta$, in particular,  $\int_{\partial S} u^*\lambda = \int_{\partial S} u^*\lambda_{std}$, this allows us to apply the integrated maximum principle by Abouzaid-Seidel \cite{abouzaid2010open}, or \cite[Lemma 2.2]{cieliebak2018symplectic}. More precisely,  we have
     	\begin{eqnarray*}
     		0< E(u|_S) & := & \frac{1}{2}\int_S |\rd u-X_H\otimes \rd t|^2\rd \vol \\
     		& = & \int_S u^*\rd \lambda - u^*\rd H \wedge \rd t \\
     		&= &\int_S\rd(u^*\lambda-u^*H\rd t)\\
     		& = & \int_{\partial S} u^*\lambda-(u^*H)\rd t \\
     		& = & \int_{\partial S} u^*\lambda_{std}-(u^*H)\rd t \\
     		& = & \int_{\partial S} \lambda_{std}(\rd u-X_H(u)\otimes \rd t)\\
     		& = & \int_{\partial S} \lambda_{std}(J\circ (\rd u - X_H(u)\otimes \rd t) \circ(-j)) \\
     		& = & \int_{\partial S} \frac{2}{\epsilon}\rd r \circ \rd u \circ (-j)\le 0
     	\end{eqnarray*}  
     	which is a contradiction.  The second last equality follows from that $u$ solves the Floer equation $(\rd u -X_H(u)\otimes \rd t)^{0,1}=0$. The last equality follows from $\lambda_{std}\circ J=\rd \frac{2}{\epsilon}r$ and $\rd r (X_H)=0$ on $\partial D^*_{\frac{3}{4}\epsilon}L$. The last inequality follows from the fact that for each tangent vector $\xi$ of $\partial S$ defining its boundary orientation, $j\xi$ points into $S$, therefore $\rd r \circ \rd u (j\xi)\ge 0$. In other words, since we only consider orbits $\gamma$ with $\int \gamma^*\beta =0$, the integrated maximum principle can be applied as usual. As a consequence, combined with the asymptotic behavior lemma \cite[Lemma 2.3]{cieliebak2018symplectic},  there is no differential from a type \eqref{III} orbit to a type \eqref{II} or \eqref{I} orbit if both of them are annihilated by $\beta$. This shows that type \eqref{I} and \eqref{II} orbits with action in $(-A,0)$ generate a quotient complex.

     	By \cite[Lemma 2.2]{cieliebak2018symplectic}, all curves involved in the definition of Hamiltonian-Floer cohomology of the linear extension $H_{D^*_{\epsilon}L}$ on $T^*L$ is contained in $D^*_{\frac{3\epsilon}{4}}L$, where the geometric data is identified with that of $W$ restricted to $D^*_{\frac{3\epsilon}{4}}L$. In particular, we can identify the moduli spaces for the differential, hence our quotient complex is identified with $C^*_{\beta}(H_{D^*_{\epsilon}L})$ on the nose.
     \end{proof}
      Hence there is a truncated Viterbo transfer map $C^*(H_W)\to C^*_{\beta}(H_{D^*_{\epsilon}L})$ which respects the splitting into $C_0,C_+$. Then the claim follows from the composition with the continuation map to the full symplectic cohomology of $T^*L$.
\end{proof}

\begin{remark}\label{rmk:better_bound}
In fact, Proposition \ref{prop:Viterbo} holds for $A_{\min}(L,A)>A$. To see this, for $\delta\ll 1$, we consider the Hamiltonian with slope $\frac{A}{(1-\delta)\epsilon}$ on $D^*_{\epsilon} L\backslash D^*_{\delta \epsilon} L$, then type \eqref{II} orbits have action in $(-\frac{A \delta}{1-\delta},0)+\Z\Gamma$ and type \eqref{III} orbits have action in $(-\frac{A \delta}{1-\delta}, A)+\Z\Gamma$, then the argument in Proposition \ref{prop:Viterbo} can be applied if $\Gamma > A+\frac{A\delta}{1-\delta}$. Then the claim follows when $\delta \to 0$. This bound is sharp, which can be seen from a family of thin ellipsoids converging to the infinite cylinder $\D\times \C^{n-1}$, where $\D\subset \C$ is the unit disk. Another perspective of such bound is from the functoriality of SFT and the relation between SFT and string topology \cite{cieliebak2009role}, see Remark \ref{rmk:SFT}.
\end{remark}

By exactly the same argument, we have the following parameterized version for equivariant symplectic cohomology. 
\begin{proposition}\label{prop:Viterbo2}
	Let $W$ be an exact domain and $L$ a Lagrangian such that $A_{\min}(L,W)\ge 2A>0$, then we have a Viterbo transfer map $SH^{[0],*}_{S^1,<A}(W) \to SH_{S^1}^*(T^*L)$ of $R[u]$-modules, such that the following exact sequences commute
	$$
	\xymatrix{ \ldots \ar[r]& H_{S^1}^*(W) \ar[r] \ar[d] & SH^{[0],*}_{S^1,<A}(W) \ar[r] \ar[d] & SH^{[0],*}_{+,S^1,<A}(W) \ar[r] \ar[d] & H^{*+1}_{S^1}(W) \ar[r] \ar[d] & \ldots \\
		\ldots\ar[r] & H^*_{S^1}(T^*L) \ar[r] & SH^*_{S^1}(T^*L) \ar[r] & SH_{+,S^1}^*(T^*L) \ar[r] & H^{*+1}_{S^1}(T^*L) \ar[r] & \ldots }
	$$
\end{proposition}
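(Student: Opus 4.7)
The plan is to run the argument of Proposition \ref{prop:Viterbo} verbatim, but on the Borel construction: replace the Hamiltonian $H$ on $S^1\times \widehat W$ by a family $\{H_z\}_{z\in S^{2m+1}}$ of $S^1$-dependent Hamiltonians parameterized over a finite-dimensional approximation $S^{2m+1}$ of $ES^1$, chosen so that each $H_z$ has the same four-piece profile in the $r$-coordinate on $D^*_\epsilon L$ and on $\partial W\times[1,\infty)$ as in Proposition \ref{prop:Viterbo}, with the slopes depending only on the strata $D^*_{\epsilon/2}L,\ D^*_\epsilon L\setminus D^*_{\epsilon/2}L,\ W\setminus D^*_\epsilon L,\ \partial W\times[1,\infty)$ and not on $z$. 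The orbits of $X_{H_z}$ fall into the same five types (I)--(V) with exactly the same action windows computed in the proof of Proposition \ref{prop:Viterbo}; in particular type (II), (III) orbits that are contractible in $W$ and have action in $(-A,0)$ automatically satisfy $\int\gamma^*\beta=0$ because $A_{\min}(L,W)\ge 2A$.

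Next I would check that each of the operations $\delta^k$ defining $\delta^{S^1}=\sum_{k\ge 0} u^k\delta^k$ preserves the relevant filtrations. The map $\delta^k$ counts parameterized Floer cylinders $u:\R\times S^1\to \widehat W$ solving $\partial_s u+J_z(\partial_t u-X_{H_z})=0$ with $z$ ranging over a $2k$-chain in $S^{2m+1}$ modulo the $S^1$-action; such cylinders still enjoy a non-positive energy identity with respect to the action functional, so all $\delta^k$ respect the action filtration. The integrated maximum principle used in the claim inside the proof of Proposition \ref{prop:Viterbo} is a local computation on the slice $\partial D^*_{3\epsilon/4}L$ that only depends on $\lambda_{std}\circ J_z=\tfrac{2}{\epsilon}\rd r$ and $\rd r(X_{H_z})=0$ there; both conditions are preserved by choosing the family $\{J_z\}$ cylindrically convex in this collar for every $z$ and $t$, so the argument that Floer curves for orbits annihilated by $\beta$ stay in $D^*_{3\epsilon/4}L$ carries over with no change. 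The asymptotic behavior lemma \cite[Lemma 2.3]{cieliebak2018symplectic} is likewise unaffected by parameterization.

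Putting these pieces together, the subcomplex $C^{[0]}_{<A}$ generated by orbits contractible in $W$ with action in $(-A,0)$ is $\delta^{S^1}$-invariant, and its quotient by the contribution of type (I), (II) orbits is identified on the nose with the equivariant complex $C^*_{\beta,S^1}(H_{D^*_\epsilon L})$ of the linear extension to $T^*L$, because all parameterized Floer curves between orbits annihilated by $\beta$ remain inside the Weinstein neighborhood $D^*_{3\epsilon/4}L$ on which the $W$- and $T^*L$-data agree. Composing with continuation from an equivariant admissible Hamiltonian $H_W$ of slope $A$ (which increases action and hence lands in $C^{[0]}_{<A}$) and then with continuation to the full equivariant symplectic cohomology of $T^*L$, and passing to direct limits in $m$, yields the desired map $SH^{[0],*}_{S^1,<A}(W)\to SH^*_{S^1}(T^*L)$, compatible with the tautological sequence by construction. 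Since the approximation $S^{2m+1}\hookrightarrow S^{2m+3}$ induces the $u$-action on the Borel model, every step intertwines this action, so the transfer is a map of $R[u]$-modules.

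The main obstacle is to ensure that the choice of $J_z$ used to achieve transversality for all $\delta^k$ simultaneously can be kept cylindrically convex for $\lambda_{std}$ in a neighborhood of $\partial D^*_{3\epsilon/4}L$ and of the slice where $H_z$ becomes linear with slope $A$; this is the standard generic perturbation away from these slices, exactly as in the non-equivariant setting, so no new analytic input is required beyond what is already used in \cite{bourgeois2017s,gutt2017positive,zhou2019symplectic}.
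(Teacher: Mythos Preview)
Your proposal is correct and matches the paper's approach exactly: the paper's entire proof is the single sentence ``By exactly the same argument, we have the following parameterized version for equivariant symplectic cohomology,'' and you have faithfully unpacked what that sentence means by carrying the action estimates, the integrated maximum principle near $\partial D^*_{3\epsilon/4}L$, and the asymptotic behavior lemma through the Borel construction. One minor wording issue: when you write ``its quotient by the contribution of type (I), (II) orbits,'' you mean the quotient complex \emph{consisting of} type (I), (II) orbits (they form a quotient, not a sub, in this action convention), but your identification of it with $C^*_{\beta,S^1}(H_{D^*_\epsilon L})$ makes clear you have the correct object in mind.
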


Next, we consider general symplectically aspherical domains. Let $H_W$ be an admissible Hamiltonian on $W$ with slope $a$, the symplectic action of a contractible orbit $\gamma$ is given by \eqref{eqn:action} if $\gamma$ is contractible in $\partial W$. If $\gamma$ is only contractible in $W$, then \eqref{eqn:action}, \eqref{eqn:action'} might differ\footnote{Since our non-constant orbits are in $\partial W \times (1,\infty)$, where $\lambda$ is still defined even though $\omega$ is not globally exact.}. Since for a Hamiltonian of slope $a$, there are only finitely many orbits. Therefore we can find $f(a)\ge a>0$, such that symplectic action of periodic orbits of $X_{H_W}$ is supported in $(-f(a),f(a))$. 
\begin{proposition}\label{prop:Viterbo3}
	Let $W$ be a symplectically aspherical domain and $L$ a Lagrangian such that $A_{\min}(L,W)\ge 4f(A)>0$, then we have a Viterbo transfer  $SH^{[0],*}_{<A}(W) \to SH^*(T^*L)$ such that the following commutes
	$$
	\xymatrix{ H^*(W) \ar[r]^{\iota} \ar[d] & SH^{[0],*}_{<A}(W)\ar[d]  \\
		  H^*(T^*L) \ar[r] & SH^*(T^*L) }
	$$
\end{proposition}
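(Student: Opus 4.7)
The plan is to follow Proposition \ref{prop:Viterbo} using the same step Hamiltonian $H$ on $\widehat W$ adapted to the Weinstein embedding $D^*_\epsilon L\hookrightarrow W$ (slopes $0,\,2A/\epsilon,\,0,\,A$ on the four regions $D^*_{\epsilon/2}L$, the annulus, $W\setminus D^*_\epsilon L$, $\partial W\times(1,\infty)$) together with the admissible Hamiltonian $H_W$ of slope $A$, and to classify the contractible orbits of $X_H$ into the five types (I)--(V) as in \S\ref{ss22}. The substantive modification is in the action analysis, which now uses \eqref{eqn:action'} rather than \eqref{eqn:action}.

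The key observation is as follows. For a contractible orbit $\gamma$ of type (I), (II) or (III) — which lies inside $D^*_\epsilon L$, where $\omega=\rd\lambda_{std}$ — the symplectic action coincides with the exact-case local action of Proposition \ref{prop:Viterbo} provided $\gamma$ can be capped inside $D^*_\epsilon L$, which happens iff the underlying geodesic $\bar\gamma\subset L$ is null-homotopic in $L$. Otherwise $[\bar\gamma]$ is a nontrivial element of $\ker(\pi_1(L)\to\pi_1(W))$, so any disk in $W$ capping $\bar\gamma$ represents a nonzero class in $\pi_2(W,L)$ of $\omega$-area with absolute value $\ge A_{\min}(L,W)\ge 4f(A)$, and this $\omega$-area is exactly the discrepancy between the local and global actions. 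Hence any type (II) or (III) orbit with $|\cA_H(\gamma)|<2f(A)$ is forced to be contractible inside $D^*_\epsilon L$, with local and global actions agreeing.

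With this in hand, the argument of Proposition \ref{prop:Viterbo} goes through. The continuation map $C^{[0],*}(H_W)\to C^{[0],*}(H)$ has image lying in the action window $(-2f(A),\,2f(A))$ (since the $H_W$-actions are in $(-f(A), f(A))$ and continuation increases action by at most $\max(H-H_W)\le A\le f(A)$), and by the previous paragraph every type (II) or (III) orbit in this window is locally contractible. The integrated maximum principle, applied locally inside $D^*_\epsilon L$ with $\lambda_{std}$ as a primitive of $\omega$, then identifies the quotient complex generated by type (I) and locally-contractible type (II) orbits with a subcomplex of $C^*(H_{D^*_\epsilon L})$. Composing with the continuation to $SH^*(T^*L)$ yields the desired truncated Viterbo transfer, and commutativity of the square is immediate on the chain level: type (I) constants representing $H^*(W)\to SH^{[0],*}_{<A}(W)$ are sent to constants on $L$ representing $H^*(T^*L)\to SH^*(T^*L)$.

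The chief technical obstacle is the integrated maximum principle: in Proposition \ref{prop:Viterbo} it relied on $\lambda$ being a global primitive of $\omega$ on the cobordism $W\setminus D^*_\epsilon L$, which fails in the aspherical setting. The bound $A_{\min}(L,W)\ge 4f(A)$ is calibrated exactly so that all orbits in the relevant action window are contractible inside $D^*_\epsilon L$, permitting the maximum principle to be run with only the local primitive $\lambda_{std}$ on $D^*_\epsilon L$. This reduces the whole problem to the exact-case argument performed entirely inside the Weinstein neighborhood.
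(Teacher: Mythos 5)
There is a genuine gap in the central claim of your second paragraph. You assert that if the underlying loop $\bar\gamma\subset L$ of a type (II) or (III) orbit is not null-homotopic in $L$, then any capping disk represents a nonzero class of $\pi_2(W,L)$ and hence has $\omega$-area of absolute value at least $A_{\min}(L,W)$. But $A_{\min}$ as defined in \eqref{eqn:Amin} only bounds classes with \emph{strictly positive} area: a nonzero class in $\pi_2(W,L)$ may perfectly well have $\int u^*\omega=0$ (already for a product torus in $\C^2$ with commensurable radii this happens). So orbits in the action window need \emph{not} be contractible in $D^*_{\epsilon}L$, and your dichotomy ``locally contractible vs.\ action pushed out of the window'' fails. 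This matters twice: first, the collection of type (I) and locally contractible type (II) orbits is not closed under the Floer differential inside $C^{[0]}(H)$ (a cylinder can connect a locally contractible orbit to one that is only contractible in $W$ with a zero-area capping disk, since both lie in the trivial free homotopy class of $W$), so you do not actually get a quotient complex; second, your integrated maximum principle is run only ``with the local primitive $\lambda_{std}$'' for locally capped orbits, so it says nothing about cylinders asymptotic to the orbits you have wrongly excluded. The paper's proof works instead with the class of orbits whose capping disks satisfy $\int u^*\omega=0$ (strictly larger than your class), and derives the identity $\int_S u^*\omega=\int_{\partial S}\lambda_{std}$ needed for the integrated maximum principle from the relations $\int u_x^*\omega=\int x^*\lambda_{std}$ and $\int u_y^*\omega=\int y^*\lambda_{std}$ for the capping disks of \emph{both} asymptotics, rather than by reducing to the exact case inside the Weinstein neighborhood.

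A secondary calibration problem: you keep the slopes of Proposition \ref{prop:Viterbo}, so the Hamiltonian plateaus at the value $A$ on $W\backslash D^*_{\epsilon}L$, while your action window is $(-2f(A),2f(A))$ (or even $(-f(A),f(A))$). Since $A\le f(A)$, the type (IV) constants (action $\approx A$) and the type (V) orbits (action in $(A-f(A),A+f(A))$) then land inside the window, and these live outside the Weinstein neighborhood, destroying the identification with a complex on $T^*L$. This is exactly why the paper raises the plateau to $2f(A)$ (slope $4f(A)/\epsilon$ on the annulus) and truncates at $(-f(A),f(A))$, which is also the source of the hypothesis $A_{\min}(L,W)\ge 4f(A)$ rather than $2A$.
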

\begin{proof}
      By the Weinstein neighborhood theorem, we have a symplectic embedding $(D_{\epsilon}^*L,\rd \lambda_{std})$ into $W$ as before. Then we consider Hamiltonians $H$ on $\widehat{W}$ in the definition of the Viterbo transfer that are $C^2$ perturbations to functions with the following properties,
	\begin{enumerate}
		\item $0$ on $D^*_{\frac{\epsilon}{2}}L$,
		\item linear with slope $\frac{4f(A)}{\epsilon}$ with respect to $r\in (0,\epsilon]$ on $D^*_{\epsilon}L\backslash D^*_{\frac{\epsilon}{2}}L$.
		\item $2f(A)$ on $W\backslash D^*_{\epsilon}L$,
		\item linear with slope $A$ on $\partial W \times (1,\infty)$.
	\end{enumerate} 
      We write  $\Gamma:=A_{\min}(L,W)$, which by assumption is $\ge 4f(A)$. As in \S \ref{ss22}, we have the following five types of orbits.
    \begin{enumerate}[(I)]
    	\item Constant orbits on  $D^*_{\frac{\epsilon}{2}}L$ with $\cA_H\approx 0$.
    	\item Non-constant periodic orbits near $\partial D^*_{\frac{\epsilon}{2}}L$. The symplectic action using $\lambda_{std}$ for those type \eqref{II} is the same as before, i.e.\ in the region $(-2f(A),0)$.  The actual symplectic action using $\omega$ for a contractible orbit $\gamma$ of type \eqref{II} differs from the symplectic action using $\lambda_{std}$ by $\int u^*\omega$, where $u\in \pi_2(W,L)$ with boundary homotopic to $\gamma$ in $D_{\epsilon}^*L$. To see this, let $u_0:D\to W$ to be the disk used to compute the symplectic action of $\gamma$ in \eqref{eqn:action'}, and $u_1$ be the annulus in $D^*_{\epsilon}L$ with $\partial u_1=-\gamma \cup \eta$, where $\eta\subset L$. Then the glued disk $u_0\# u_1$ is in $\pi_2(W,L)$. Then we have
    	\begin{eqnarray*}
    	\int (u_0\# u_1)^*\omega  & =  & \int u_0^*\omega +\int u_1^*\omega = \int u_0^*\omega + \int u_1^*(\rd \lambda_{std}) \\
    	&  = & \int u_0^*\omega -\int \gamma^*\lambda_{std}+\int \eta^*\lambda_{std}= \int u_0^*\omega -\int \gamma^*\lambda_{std},
    	\end{eqnarray*}
    	i.e.\ the action difference, where the last equality follows from that $\lambda_{std}|_L=0$. Therefore the actual symplectic action is in  $(-2f(A),0)+\Z \Gamma$. It is also important to note that the integral $\int u^*\omega$ is independent of $u$, only depends on $\gamma$, as $W$ is symplectically aspherical and $\pi_1(L)\to \pi_1(D^*_{\epsilon}L)$ is an isomorphism.
    	\item Non-constant periodic orbits near $\partial D^*_{\epsilon}L$.  The actual symplectic action using $\lambda$ again differs from that of type \eqref{III} in \S \ref{ss22} by $\int u^*\omega$. Hence the symplectic action is in   $(-2f(A),2f(A))+\Z \Gamma$. 
    	\item  Constant orbits on $W\backslash D^*_{\epsilon}L$, with $\cA_H\approx 2f(A)$, as $H\approx 2f(A)$ there.
    	\item Non-constant periodic orbits near $\partial W$ with $\cA_H\in (-f(A), f(A))+2f(A) = (f(A),3f(A))$.
    \end{enumerate}
      Then we consider the Hamiltonian $H_W$ on $\widehat{W}$, which is admissible with slope $A$. Since the symplectic action of orbits of $X_{H_W}$ is in $(-f(A),f(A))$ and the continuation map $C^{[0],*}(H_W)\to C^{[0],*}(H)$ increases symplectic action, we get a cochain map from $C^{[0],*}(H_W)$ to the complex generated by orbits of $X_H$ that are contractible in $W$ with action in $(-f(A),f(A))$.  Since $\Gamma\ge 4f(A)$, any type \eqref{II}, \eqref{III} orbit in this action window has the property that $\int u^*\omega=0$, where $u\in \pi_2(W,L)$ is the disk as before. We claim that type \eqref{I} and \eqref{II} orbits with action in $(-f(A),f(A))$ generate a quotient complex, which is isomorphic to $C^*_{\omega}(H_{D^*_{\epsilon}L})$, i.e.\ the subcomplex generated by orbits in the homotopy classes that are contractible in $W$ and $\int u^*\omega=0$ for the contracting disk $u$, where $H_{D^*_{\epsilon}L}\in C^{\infty}(S^1\times \widehat{D^*_{\epsilon}L})$ is the linear extension of the truncation.  In view of the proof of Proposition \ref{prop:Viterbo}, this boils down to the integrated maximum principle for type \eqref{II}, \eqref{III} orbits with $\int u^*\omega=0$. More precisely, it is sufficient to prove that $\int_S u^*\omega = \int_{\partial S} \lambda_{std}$, where $S\subset \R\times S^1$ is the part where $u$ exceeds the level set to which we try to apply the integrated maximum principle. Let $u_x,u_y$ be the capping disks used to define symplectic action \eqref{eqn:action'} for $x$ and $y$. Therefore we have 
      $$\int_{\R\times S^1} u^*\omega=\int_D (u_x^*\omega -u_y^*\omega).$$
      Since $x,y$ have the property that $\int u_x^*\omega = \int x^*\lambda_{std}$ and  $\int u_y^*\omega = \int y^*\lambda_{std}$, we have 
      $$\int_{\R\times S^1} u^*\omega=\int x^*\lambda_{std}-y^*\lambda_{std}.$$
      Since $u|_{\R\times S^1\backslash S}$ is contained in the Weinstein neighborhood of $L$, we have, by Stokes' theorem,
      $$\int_{\R\times S^1\backslash S} u^*\omega = \int_{\R\times S^1\backslash S} u^*(\rd \lambda_{std})= \int \left(x^*\lambda_{std}-y^*\lambda_{std}\right)-\int_{\partial S} \lambda_{std}.$$
      Therefore we have  $\int_S u^*\omega = \int_{\partial S} \lambda_{std}$. As a consequence, the claim holds, as well as the truncated Viterbo transfer map.
\end{proof}

\begin{proof}[Proof of Theorem \ref{thm:main}]
	If $W$ is a symplectically aspherical domain such that $SH^*(W)=0$, i.e.\ $1=0$ in $SH^{[0],*}(W)$, then $H^*(W)\stackrel{\iota}{\to} SH^{[0],*}(W)$ maps $1$ to $0$. Since $SH^{[0,*]}(W)=\varinjlim SH^{[0],*}_{\le a}(W)$, then by definition, 
	$$A:=\min\left\{ a \left| \iota(1)=0 \in SH^{[0],*}_{<a}(W), 1\in H^0(W) \right. \right\}$$
	is finite. Then we have  $A_{\min}(L,W)< 4 f(A)$. Assume otherwise, by Proposition \ref{prop:Viterbo3}, we have that $1\in H^*(T^*L)$ is also mapped to zero in $SH^*(T^*L)$, contradicting that $SH^*(T^*L)\ne 0$ \cite{abbondandolo2006floer,abouzaid2013symplectic,viterbo2018functors,salamon2006floer}. This proves Claim (1).
	
	If $W$ is exact and admits a $k$-semi-dilation, then we have 
		$$B:=\min\left\{ a \left| \exists x \in SH^*_{+,S^1,<a}(W), \text{s.t.\ } u^{k+1}(x)=0, \pi_0\circ \delta(x)=1\in  H^0(W)\right. \right\}$$
	is finite. If there is a $K(\pi,1)$ Lagrangian $L\subset W$ with $A_{\min}(L,W) \ge  2B$, then by Proposition \ref{prop:Viterbo2}, we have a commutative square of $R[u]$-modules
	$$
	\xymatrix{SH^{[0],*}_{+,S^1,<2B}(W) \ar[r]^{\delta} \ar[d]^{\phi} & H^{*+1}_{S^1}(W)  \ar[d] \ar[r]^{\pi_0} & H^0(W)\ar[d] \\
	SH_{+,S^1}^*(T^*L) \ar[r]^{\delta} & H^{*+1}_{S^1}(T^*L) \ar[r]^{\pi_0} & H^0(T^*L)}
	$$
	Since there is a $x\in SH^{[0],*}_{+,S^1,<2B}(W)$ such that $\pi_0\circ \delta(x)=1$ and $u^{k+1}(x)=0$, then $\pi_0\circ \delta(\phi(x))=1$ and $u^{k+1}(\phi(x))=0$, i.e.\ $T^*L$ admits a $k$-semi-dilation, contradicting Example \ref{ex:counter}. This proves Claim (2).
\end{proof}

\begin{remark}
Following from Remark \ref{rmk:better_bound}, we have $A_{\min}(L,W)\le A$ when $W$ is an exact domain with $SH^*(W)=0$. When $W$ is given by the ellipsoid $\sum_{i=1}^n \frac{|z_i|^2}{a_i}\le 1$ for $0<a_1\le \ldots \le a_n$, one can show that $A=\pi a_1$.\footnote{Assume for simplicity $a_1<a_2$, then the non-degenerate Reeb orbit in the $z_1$-coordinate plane is the only Reeb orbit to have the right Conley-Zehnder index to kill $1$.} Then it contains a Lagrangian torus $L=\{|z_i|^2=b_i\}$, where $b_1=a_1-\epsilon$ for $0< \epsilon \ll 1$ and $b_i=\frac{a_i\epsilon}{a_1}$ for $i>1$. Then we have $A_{\min}(L,W)=\min \{\pi b_i\}$, which is $\pi b_1=\pi a_1-\pi \epsilon$ when $a_i\gg 0$ for $i>1$. That is the bound  $A_{\min}(L,W)\le A$ is close to a sharp bound when $W$ is a thin ellipsoid. However the bound is much weaker when $W$ is a round ball, i.e.\ $a_i=1$, where the sharp bound for Lagrangian torus is $\frac{\pi}{n}$  by Cieliebak-Mohnke \cite{cieliebak2018punctured}.
\end{remark}

\begin{proof}[Proof of Theorem \ref{thm:local}]
	By \cite{albers2017local} if $\pi_2(Q)\to H_2(Q)$ is nontrivial, then there exists a local system $\rho$ on $\cL_0Q$ (the space of contractible loops), such that $SH^*(T^*Q;\rho)=0$.  As the presence of local system only twists the formula for the differential but uses the same moduli spaces, Proposition \ref{prop:Viterbo} holds for symplectic cohomology with local systems by the same argument. Such local system is classified by $\Hom_{inv}(\pi_2(W),\C^*)$, i.e.\ the subset of $\Hom(\pi_2(W),\C^*)$ that is invariant under the $\pi_1(W)$ conjugation. Since $\pi_2(L)\to \pi_2(T^*Q)$ is trivial, we have the local system is trivial on $D^*_{\epsilon}L$, hence $SH^*(D^*_{\epsilon}L; \rho|_{D^*_{\epsilon}L} )=SH^*(T^*L;\C)\ne 0$. Then the proof of Theorem \ref{thm:main} can be applied to finish the proof.
\end{proof}

\section{The chord conjecture}
For the completeness, we recall the proof of Theorem \ref{thm:chord} from \cite{mohnke2001holomorphic}.
\begin{proof}[Mohnke's proof of Theorem \ref{thm:chord}] 
	Let $\alpha$ be a contact form on $Y$, which is, a priori, different from the contact form $\lambda|_{\partial W}$. However, we have $\alpha = f\lambda|_{\partial W}$ for some positive function $f$ on $Y$. Then $\widehat{\lambda}$ restricted the contact hypersurface $r=f(x)$ in $\widehat{W}$ is $\alpha$ (through the obvious identification of $r=f(x)$ and $Y=\partial W$). Note that we can find $r_0>0$, such that $r_0f\le 1$, i.e.\ the contact hypersurface $r=r_0f$ is contained in the domain $W$ and   $\widehat{\lambda}$ restricted the contact hypersurface $r=r_0f(x)$ in $W$ is $r_0\alpha$. Since the Reeb flows of $r_0\alpha$ and $\alpha$ are the same up to rescaling,  the chord conjecture holds for $\alpha$ iff it holds for $r_0\alpha$. Let $W'$ denote the exact subdomain bounded by $r=r_0f$. Since $W'\subset W$ is an homotopy equivalence, we have $\cA_{\min}(\cL_{all},W')\le \cA_{\min}(\cL_{all},W) <\infty$. Let $\phi_t$ denote the Reeb flow on $\partial W'$, i.e.\ Reeb flow of $r_0\alpha$. If the Legendrian $\Lambda$ has no Reeb chord, then $\Phi:\Lambda \times [\epsilon,1]\times \R \to \partial (W'\times [\epsilon,1]_{r'}, \rd(r'\lambda|_{\partial W})), (x,s,t)\mapsto (\phi_t(x),s)$ is an embedding. Here $r'$ is the cylindrical coordinate for the symplectization of $\partial W'$, which is different from the $r$ coordinate on $\widehat{W}$. Then it is direct to check that any embedded loop $\gamma \in [\epsilon,1]\times \R$ gives rise to a Lagrangian $\Phi(\Lambda \times \gamma)\subset \partial W'\times [\epsilon,1]\subset W'$. Note that $\lambda|_{\Lambda}=0$ and $\Phi^*(\rd(r'\lambda|_{\partial W'}))=\rd s\wedge \rd t$, then we know that $A_{\min}(\Phi(\Lambda \times \gamma),W')$ is the the area enclosed by $\gamma$, which can be arbitrarily big, contradiction! The situation for $K(\pi,1)$ Legendrians is similar as the product of a $K(\pi,1)$ space with $S^1$ is again a $K(\pi,1)$ space.
\end{proof}

\begin{proof}[Proof of Corollary \ref{cor:chord}]
	    Cases \eqref{ch1}, \eqref{ch3} follow Theorem \ref{thm:chord} and \ref{thm:main}. For case \eqref{ch4}, if $\pi_2(\Lambda)\to \pi_2(T^*Q)$ is trivial, then $\pi_2(\Phi(\Lambda \times \gamma ))\to \pi_2(T^*Q)$ is also trivial. Then the existence of Reeb chords follows from Theorem \ref{thm:local} and the same argument of Theorem \ref{thm:chord}. In the case \eqref{ch2}, if $\pi_1(\Lambda)\to \pi_1(W)$ is trivial, then we know that $A_{\min}(\Phi(\Lambda \times \gamma),W)$ is again area enclosed by $\gamma$\footnote{In general, it might be smaller contributed by a loop with a nontrivial $\pi_1(\Lambda)$ component that is contractible only in $W$.}. Then the claim follows from the argument of Theorem \ref{thm:chord} and Theorem \ref{thm:main}.
\end{proof}

\begin{proof}[Proof of Corollary \ref{cor:homo}]
The proof follows from the same argument as \cite[Theorem 1.1]{MR2418284}, where we replace \cite{chekanov1998lagrangian} by Theorem \ref{thm:main} in the proof of \cite[Theorem 2.1]{MR2418284} to obtain a universal upper bound of the minimal symplectic area. 
\end{proof}

\begin{remark}\label{rmk:surgery}
	In the special case of $SH^*(W)=0$,  the chord conjecture for Legendrian spheres can also be proven by surgery following the idea in \cite{cieliebak2002handle}.  Let $\widetilde{W}$ be the domain obtained from attaching a Weinstein handle to the Legendrian sphere $\Lambda$. If $\Lambda$ has no Reeb chords, then by \cite{cieliebak2002handle}, we have the Viterbo transfer map $SH_+^*(\widetilde{W})\to SH_+^*(W)$ is an isomorphism. Note that we have the following commutative diagram,
	$$
	\xymatrix{ \ldots \ar[r] & H^*(\tilde{W}) \ar[r]\ar[d] & SH^*(\widetilde{W}) \ar[r]\ar[d] & SH_+^*(\widetilde{W})\ar[r]\ar[d] & H^{*+1}(\widetilde{W})\ar[r]\ar[d] & \ldots \\
		\ldots \ar[r] & H^*(W) \ar[r] & SH^*(W) \ar[r] & SH_+^*(W) \ar[r] & H^{*+1}(W) \ar[r] & \ldots}
	$$ 
	Since $SH^*(W)=0$, we have $SH_+^{*}(W)\to H^{*+1}(W)$ hits $1$. Therefore $SH_+^{*}(\widetilde{W})\to H^{*+1}(\widetilde{W})$ also hits $1$, because  $SH_+^*(\widetilde{W})\to SH_+^*(W)$ is an isomorphism. As a consequence, we have $H^{*+1}(W)\simeq SH_+^*(W)\simeq SH_+^{*}(\widetilde{W})\simeq H^{*+1}(\widetilde{W})$. However, we can not have $H^*(W)\simeq H^{*}(\widetilde{W})$, as they have different Euler characteristics,  which is a contradiction.
\end{remark}

\begin{remark}\label{rmk:SFT}
	Following the philosophy of SFT, if $V\subset W$ is not an exact subdomain, i.e.\ the cobordism $W\backslash V$ is not exact, the strong cobordism $W\backslash V$ gives rise to a Maurer-Cartan element by counting holomorphic caps in $W\backslash V$ \cite{cieliebak2009role}, and the Viterbo transfer map should hold for the symplectic cohomology of $V$ deformed by the Maurer-Cartan element. Then the classical Viterbo transfer map should hold if we truncated at an action smaller than the minimal action of the Maurer-Cartan element. In the case of $V=D^{*}_{\epsilon}L$, the action of a Maurer-Cartan element is a multiple of $A_{\min}(L,W)$ minus the period of a geodesic loop for metric $\epsilon g$. In particular, it is not obvious that the Maurer-Cartan action is greater than $A_{\min}(L,W)$. However, if we let $\epsilon\to 0$, we can replace the symplectic cohomology of $T^*L$ by the string topology of $L$ following \cite{cieliebak2018symplectic}. In this case, the Maurer-Cartan element is represented by the evaluation of boundaries of holomorphic disks, whose action is indeed bounded below by $A_{\min}(L,W)$. Those two constructions (deformed symplectic cohomology, the relation between SFT and string topology) faces various technical difficulties, whose full constructions with complete analytical details are not available in literature. The argument in this paper can be viewed as a substitute of such ideas in the classical constructions. 
\end{remark}

The closely related Weinstein conjecture asserts the existence of Reeb orbits on every compact contact manifold. It was proven in dimension $3$ by Taubes \cite{taubes2007seiberg} and various other cases \cite{abbas2005weinstein,acu2018planarity,hofer1993pseudoholomorphic}. All the proofs are based on existence of holomorphic curves, often abundances of holomorphic curves\footnote{In the context of ECH, the non-triviality of the $U$-map implies the abundance of holomorphic curves.}. The relation between holomorphic curves and Reeb orbits are much better understood than the following question. 
\begin{question}
	How abundances of holomorphic curves (in the symplectic domain) can be used to show the existence of Reeb chords for any Legendrian.
\end{question}
\begin{remark}
	Note that the conditions in Corollary \ref{cor:chord} imply that the symplectic manifold is uniruled \cite[Theorem 3.27]{zhou2019symplectic}. 
\end{remark}

\section{New examples with $k$-semi-dilations}
Let $X$ be a degree $m$ smooth hypersurface in $\CP^{n+1}$ for $m\le n$. By \cite[Lemma 7.1]{RSFT}, for any holomorphic section $s$ of $\cO(k)$, we have the affine variety $X\backslash s^{-1}(0)$ embeds exactly into $X\backslash S^{-1}(0)$, where $S$ is a holomorphic section of $\cO(k)$ such that $S^{-1}(0)$ is a smooth divisor with multiplicity $1$. In our case, we can assume $S^{-1}(0)$ in the intersection of $X$ with a generic smooth degree $k$ hypersurface in $\CP^{n+1}$. In view of the functorial property of $k$-semi-dilations, i.e.\ when $V\subset W$ is an exact subdomain and $W$ admits a $k$-semi-dilation, then so does $V$, Theorem \ref{thm:SD} is implied by the following.

\begin{theorem}\label{thm:SD'}
	If $k\le n+1-m$, then $X\backslash S^{-1}(0)$ admits a $k+m-2$-dilation.
\end{theorem}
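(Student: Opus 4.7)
The plan is to exploit the natural circle action on the completion of $W:=X\setminus S^{-1}(0)$ and compute its $S^{1}$-equivariant symplectic cohomology in low degrees via a Morse--Bott model. Writing $D:=S^{-1}(0)$ for the smooth divisor, the symplectic tubular neighborhood theorem identifies a neighborhood of $\partial W$ inside $\widehat{W}$ with a punctured disk bundle in the normal line bundle $N_{D/X}\cong \cO(k)|_{D}$. Rotating the fibers of this line bundle gives the circle action, which makes the boundary $\partial W$ a prequantization-type circle bundle over $D$ with Euler class $-c_{1}(\cO(k))|_{D}$. The Reeb orbits are then organized, in the Morse--Bott sense, into $S^{1}$-families parameterized by $D$ and indexed by the covering multiplicity $\ell\ge 1$ of the fiber circle.

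First, I would set up the Morse--Bott cochain complex by choosing a perfect Morse function on $D$ (or, if none exists, doing the standard spectral sequence over $H^{*}(D)$). Each family $\ell\ge 1$ contributes generators $(\ell,p)$ with $p\in \Crit$, and the Conley--Zehnder index in the trivialization inherited from $X$ is, by the standard formula for circle bundles in a Fano manifold,
\[
\mu_{CZ}(\ell,p) \;=\; 2\ell\,\frac{c_{1}(TX)\cdot F}{c_{1}(\cO(k))\cdot F} \;+\; \mathrm{ind}(p)\;-\;\dim_{\mathbb{R}} D \;+\;\text{constant}.
\]
Using $c_{1}(TX)=(n+2-m)H|_{X}$ and $c_{1}(\cO(k))=kH|_{X}$, the fiber ratio becomes $(n+2-m)/k$, and after the $n-\mu_{CZ}$ degree convention on $SH^{*}$, the $\ell$-family occupies a shifted copy of $H^{*}(D)$ sitting in degrees starting at roughly $2\ell(n+2-m)/k - (n-1)$. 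The hypothesis $k\le n+1-m$ is precisely what forces the $\ell=1$ stratum to contain a generator of degree $1$, namely the class $x$ represented by the maximum of $D$, and it also forces the $\ell=2$ stratum to start at sufficiently high degree.

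Second, I would identify $x\in SH^{1}_{+,S^{1}}(X\setminus S^{-1}(0))$ as the candidate $(k+m-2)$-dilation. The connecting map $\delta$ in \eqref{eqn:connect} sends $x$ to $1\in H^{0}(W)$ by the standard Morse--Bott argument: among the $\ell=1$ family, the free $S^{1}$-direction is detected by the BV operator, and pairing this with constant orbits gives the unit. For the torsion condition, $u$-multiplication raises degree by $2$, so $u^{j}x$ lives in degree $2j+1$. By the index computation above, the range $1\le 2j+1 \le 2(n+2-m)/k\cdot 2 -(n-1)$ is exactly the window where only the $\ell=1$ family contributes, and $u$-multiplication on that stratum is eventually a cohomology map on $H^{*}(D)$ (up to the ambient Chern class twist). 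The divisor $D$ is a smooth complete intersection of type $(m,k)$ in $\CP^{n+1}$ of complex dimension $n-1$, and a direct degree count shows $u^{k+m-1}x$ lands in a stratum that is zero in the relevant equivariant page, giving $u^{k+m-1}x = 0$, i.e.\ a $(k+m-2)$-dilation.

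The hard part will be the precise bookkeeping: ensuring that $x$ genuinely survives to $E_{\infty}$ of the Morse--Bott spectral sequence — i.e.\ that no higher differential from the $\ell=1$ maximum hits it, and that no cobounding element from $H^{*}(W)$ reaches it — and verifying cleanly that $u^{k+m-1}x = 0$ while $u^{k+m-2}x \neq 0$ (the latter being needed only to make ``$k+m-2$'' sharp, not for the statement). Both reduce to dimension inequalities in the stratum degrees, all of which follow from the assumption $k+m\le n+1$ together with the Fano bound $m\le n$. The orientation and sign bookkeeping in the BV/equivariant setup, as in \cite{zhou2019symplectic}, is the main technical burden, but the geometric mechanism — a single fiber orbit playing the role of the dilation, killed after $k+m-1$ applications of $u$ for index reasons — is the same as in the Brieskorn computation of \cite[Theorem A]{zhou2019symplectic}, of which Theorem~\ref{thm:SD'} is a natural generalization.
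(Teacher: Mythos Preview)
Your proposal has a genuine gap in the choice of the candidate element and in the mechanism that makes $\pi_0\circ\delta$ nonzero. You place the candidate in the $\ell=1$ Morse--Bott family and assert that $\delta(x)=1$ follows from ``the standard Morse--Bott argument''. But the connecting map $\delta$ here is \emph{not} a formal consequence of the BV structure: in the Diogo--Lisi model it counts rational curves in $X$ passing through a point of $\check{X}$ and meeting $D$ at a single point with prescribed tangency order equal to the wrapping number $\ell$. A curve in the minimal class $A$ (a line) meets the degree-$k$ divisor $D$ with total intersection $k$, so it can concentrate all its intersection at one point only when $\ell=k$. For $\ell<k$ there is simply no such curve, and the $\ell=1$ orbit contributes nothing to $\delta_{+,0}$ unless $k=1$. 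This is why the paper's candidate lives in the multiplicity-$k$ family, and why the nonvanishing $\pi_0\circ\delta(\eta)\ne 0$ requires Gathmann's computation of the relative Gromov--Witten invariant $\mathrm{GW}^{X,D}_{0,1,(k),A}([pt],[D]\cap^{n+1-m-k}[H])$ rather than an index argument.

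A second gap: a single generator $\check{p}_k$ is not $\delta^{S^1}$-closed in the positive equivariant complex. Property~\eqref{p2} gives $\delta^0_+(\check{p}^i_k)=k\,\hat{p}^{i+1}_k$ (pairing with $c_1(\cO(k))$), so one must correct $\check{p}^0_k$ by a telescoping sum $\sum_{i=1}^{m+k-2}(-1)^i u^{-i}\check{p}^i_k$, where the $p^i\in\Crit(f_D)$ represent successive hyperplane sections of $D$. The torsion $u^{m+k-1}\eta=0$ then follows tautologically from the length of this sum, not from any degree window on the $\ell=1$ stratum. Your index inequalities for the $\ell=1$ family, even if sharpened, would not supply either closedness or the correct image under $\delta$.
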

To prove this theorem, we will adopt the method in \cite{diogo2019symplectic} as in \cite{zhou2019symplectic}. We first recall the setups. We use $D$ to denote the smooth divisor $S^{-1}(0)$ and $\check{X}$ to denote the exact (Weinstein) domain $X\backslash D$. Then the contact boundary $Y:=\partial \check{X}$ is equipped with a Boothby-Wang contact form \cite{MR112160} such that the Reeb flow is the $S^1$ action on the circle bundle with periods $2\pi\cdot \N$. Then we pick two Morse functions $f_D$ on $D$ and $f_{\check{X}}$ on $\check{X}$, such that $\partial_r f_{\check{X}}>0$ near $\partial \check{X}=Y$. We use $\Crit(f)$ to denote the set of critical points. We may assume $f_D$ is perfect. This is obvious when $n=2$. When $n\ge 4$, this follows from Smale's simplification of the handle representation since $D$ is simply connected and $H_*(D)$ is free by the Lefschetz hyperplane theorem. When $n=3$, we only need to consider $m=1,k=1,2,3$, $m=2,k=1,2$ and $m=3,k=1$, the existence of perfect Morse functions follows from \cite{harer1978handlebody} except for the case $m=k=2$. When $m=k=2$, $D$ is a $K3$ surface, hence there exists a perfect Morse function on $D$. We also assume $f_{\check{X}}$ has a unique minimum $m$. In the following, we will consider the filtered cochain complex for period up to $2\pi k$.
\begin{proposition}
	 The circles with wrapping number around $D$ from $1,\ldots,k$ are in different homology classes of $\check{X}$. 
\end{proposition}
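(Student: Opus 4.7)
The plan is to compute $H_1(\check{X})$ and identify the fiber class $[F]$ of the circle bundle $Y\to D$ as a generator of order exactly $k$. Once this is in hand, the Reeb orbit with wrapping number $j$ represents $j[F]\in H_1(\check{X})$, and $[F],2[F],\ldots,(k-1)[F],k[F]=0$ are $k$ distinct elements of $\Z/k$, giving the proposition.

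I would use the long exact sequence of the pair $(X,\check{X})$. By excision applied to a tubular neighborhood of $D$, together with the Thom isomorphism for the oriented rank-$2$ normal bundle, one has $H_i(X,\check{X})\cong H_{i-2}(D)$, so $H_2(X,\check{X})=\Z$ (since $D$ is connected) and $H_1(X,\check{X})=0$. The Lefschetz hyperplane theorem gives $H_1(X)=0$ for $n\ge 2$, so the sequence collapses to
\[
H_2(X)\xrightarrow{\partial}\Z\to H_1(\check{X})\to 0.
\]
The map $\partial$ is intersection with $[D]\in H_{2n-2}(X)$, whose Poincar\'e dual in $H^2(X)$ equals $k\,[H]|_X$ because $D$ is cut out by a section of $\cO_X(k)$. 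Hence $\partial(\sigma)=k\,\langle [H]|_X,\sigma\rangle$.

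For $n\ge 3$, Lefschetz further gives $H_2(X)=\Z$, with a generator $\alpha$ whose image in $H_2(\CP^{n+1})$ is the class of a line, so $\langle [H]|_X,\alpha\rangle=1$. Therefore $\mathrm{Im}\,\partial=k\Z$ and $H_1(\check{X})=\Z/k$. The remaining low-dimensional cases $n=2$ with $m\in\{1,2\}$ (forced by $m\le n$) are checked directly: on $\CP^2$ or on a smooth quadric surface, a line still pairs to $1$ with $[H]|_X$, giving the same cokernel.

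It remains to identify the generator of this cokernel with $[F]$. Under the Thom isomorphism, $1\in H_0(D)=H_2(X,\check{X})$ is represented by a small disk in $X$ transverse to $D$, and the connecting homomorphism sends it to the boundary of this disk, namely the meridian circle, i.e.\ the fiber of $Y=\partial\check{X}\to D$. Hence $[F]$ generates $H_1(\check{X})=\Z/k$ with order exactly $k$, completing the argument. The main care needed is the geometric identification of the cokernel generator with $[F]$; the only non-uniform step is the case split at $n=2$, which is essentially immediate.
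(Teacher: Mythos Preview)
Your proof is correct and takes a genuinely different route from the paper. The paper computes $H_1(\partial\check{X})$ via the Gysin sequence of the circle bundle $\partial\check{X}\to D$, splitting into the cases $n\ge 4$, $n=3$ (where $[H]$ may fail to generate $H^2(D)$), and $n=2$ (handled by a citation and a direct check). You instead compute $H_1(\check{X})$ directly via the long exact sequence of the pair $(X,\check{X})$ together with the Thom isomorphism $H_*(X,\check{X})\cong H_{*-2}(D)$. Your approach has two advantages: it is more uniform, giving a single clean argument for all $n\ge 3$ with only a one-line check at $n=2$; and it lands directly in $H_1(\check{X})$, which is what the statement actually asks for. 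The paper's argument literally shows the fiber multiples are distinct in $H_1(\partial\check{X})$, and strictly speaking one then needs the (easy but unstated) observation that the inclusion $\partial\check{X}\hookrightarrow\check{X}$ preserves this distinction; your computation makes this automatic, since you obtain $H_1(\check{X})\cong\Z/k$ with $[F]$ as a generator. One notational quibble: the map you label $\partial$ from $H_2(X)$ to $H_2(X,\check{X})$ is the natural relative-class map rather than a connecting homomorphism, but your geometric identification of it with intersection against $[D]$ is correct.
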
	
\begin{proof}
	 To see this, when $n\ge 3$, we have $D$ is simply connected by the  Lefschetz hyperplane theorem.   $\partial \check{X}$ is an $S^1$-bundle over $D$, whose first Chern class is the restriction of $k[H]$ by the adjunction formula, where $[H]\in H^2(\CP^{n+1})$ is the hyperplane class, i.e.\ the positive generator of $H^2(\CP^{n+1})$, which is the Poincar\'e dual to the homology class represented by a complex hyperplane $H$. When $n\ge 4$, $[H]$ is also a generator of $H^2(D)$ by the  Lefschetz hyperplane theorem. Then the Gysin exact sequence implies that $H^1(\partial \check{X})=0$ and the torsion part of $H^2(\partial \check{X})$ is $\Z/k\Z$. This implies, by the universal coefficient theorem, that $H_1(\partial\check{X})=\Z/k\Z$, which is generated by the fiber circle. When $n=3$, assume $[H]\in H^2(D)$ is divisible by $M\in \N$ and $M$ is the maximum of such divisors\footnote{If $m_1,m_2$ divide $[H]$, then the lowest common multiple of $m_1,m_2$ also divides $[H]$}. That $H^2(D)$ is free implies that the torsion of $H^2(\partial \check{X})=H^2(D)/\la k[H]\ra$ is $\Z/Mk\Z$, hence we can still conclude that those multiple covers of fibers are in different homology classes in $H_1(\partial \check{X})=\Z/Mk\Z$. When $n=2$, i.e.\ $m=1,k=1,2$ and $m=2,k=1$, the only non-trivial case is when $m=1,k=2$, which follows from \cite[Proposition 2.3]{MR2342909}\footnote{Alternatively, in this case, we have $D=\CP^1$ and $\partial \check{X}$ is the circle bundle associated to $\mathcal{\cO}(4)$ and $\check{X}=T^*\mathbb{RP}^2$. Then the claim follows from a direct check.}.
\end{proof}	
 In particular, there will be no Floer differential between orbits with different wrapping numbers around $D$ when the period is at most $2\pi k$. Note that $k\le n+1-m$ is exactly the monotonicity condition in \cite{diogo2019symplectic}. Next we list the properties of the symplectic cochain complex and its $S^1$-equivariant analogue from \cite{diogo2019symplectic} as follows. 
\begin{enumerate}
	\item\label{p1} The symplectic cochain complex up to period $2\pi k$ is generated by $x\in \Crit(f_{\check{X}})$ and $\check{p}_i,\hat{p}_i$ for $p\in \Crit(f_D)$ and $1\le i \le k$. The positive cochain complex is generated by $\check{p}_i,\hat{p}_i$ for $p\in \Crit(f_D)$ and $1\le i \le k$. Here $\check{p}_i,\hat{p}_i$ can be viewed as the two Hamiltonian orbits corresponding to the Reeb orbit that is the $i$-mutiple cover of the simple Reeb orbit corresponding to the critical point $p$.
	\item\label{p2} For $p,q\in \Crit(f_D)$, we have $\la \delta^0(\check{p}_i),\hat{q}_i\ra=\la c_1(\cO(k), [W^s_D(q)]\cap [W^u_D(p)])$, where $$W^u_D(p):=\{z\in D|\lim_{t\to \infty}\phi_t(z)=p\}$$ with $\phi_t$ is the 
	gradient flow of $f_D$, i.e.\ the unstable manifold of the negative gradient flow. Similarly, we have the stable manifold 
	$$W^s_D(p):=\{z\in D|\lim_{t\to -\infty}\phi_t(z)=p\}.$$
	as well as stable/unstable manifolds $W^s_{\check{X}}(x),W^u_{\check{X}}(x)$ for the Morse function $f_{\check{X}}$ and $x\in \Crit(f_{\check{X}})$.
	\item\label{p3} For $x\in \Crit(f_{\check{X}})$ such that $W^u_{\check{X}}(x)$ represents a homology class in $H_*(\check{X})$, we have $$\la \delta^0(\check{p}_k),x\ra=k \mathrm{GW}^{X,D}_{0,1,(k),A}([W^u_{\check{X}}(x)], [W^u_D(p)])$$ 
	Here $\mathrm{GW}^{X,D}_{0,1,(k),A}([W^u_{\check{X}}(x)],[W^u_D(p)])$ stands for the relative Gromov-Witten invariant counting rational curves in $X$ of the positive generator\footnote{That is $A$ maps to the positive generator of $H_2(\CP^{n+1})$ by $X\to \CP^{n+1}$. Note that when $n=2,m=2$, $X=\CP^1\times \CP^1$ and there are two classes in $H_2(X)$ with such property, then  $\mathrm{GW}^{X,D}_{0,1,(k),A}([m],[W^u_D(p)])$ should be understood as the sum of relative Gromov-Witten invariants of both classes.} $A\in H^2(X)$ with two marked points mapped to $W^u_{\check{X}}(x)$ and  $W^u_D(p)$ in $D$ with the intersection multiplicity with $D$ at least $k$. Since we only care about $\la \delta^0(\check{p}_k),m\ra$ for the unique minimum $m$, the formula above suffices for our purpose of studying $l$-semi-dilations.
	\item\label{p4} There are also Morse differentials between critical points of $f_{\check{X}}$, and from $\check{p}_k$ to other critical points of $f_{\check{X}}$. But they are not relevant for studying $l$-semi-dilations. Those four cases are all the terms of $\delta^0$. This is because $f_D$ is perfect and $p_1,\ldots,p_k$ are in different homology classes in $\check{X}$. 
	\item\label{p5} We have $\delta^1(\check{p}_i)=i\hat{p}_i$ and $\delta^i=0$ for $i\ge 2$. This follows from the $S^1$-equivariant transversality argument in \cite[Proposition 5.9, Remark 5.10]{zhou2019symplectic}, where the $S^1$-equivariant transversality is provided by \cite{diogo2019symplectic} as the monotonicity assumption holds. The coefficient being $i$ follows from \cite[Lemma 3.1]{bourgeois2017s}. 
	\item $\delta^i$ respects the splitting of $C_0,C_+$, i.e.\ $\delta^i$ has a decomposition into $\delta^i_0+\delta^i_++\delta^+_0$ with $\delta^i_0:C_0\to C_0, \delta^i_+:C_+\to C_+, \delta^i_{+,0}:C_+\to C_0$. The connecting map  $\delta$ in \eqref{eqn:connect} is defined by $\sum_{i=0}^\infty u^i \delta^i_{+,0}$.
\end{enumerate}

\begin{proof}[Proof of Theorem \ref{thm:SD'}]
	By \cite[Theorem 2.6]{gathmann2002absolute}, we have 
	$$\mathrm{GW}^{X,D}_{0,1,(k),A}([pt], [D]\cap^{n+1-m-k}[H])\ne 0,$$
	where $H\in H_{2n}(\CP^{n+1})$ is the hyperplane class. We assume $p^0$ is the critical point of $f_D$ of index $2(m+k-2)$, such that $[W^u_D(p^0)]=[D]\cap^{n+1-m-k}[H]$\footnote{In general, it is possible that $[D]\cap^{n+1-m-k}[H]$ is represented by a linear combination of critical points, we use a single one with coefficient $1$ for simplicity.}. Similarly we use $p^i$ to represent the critical point such that $[W^u_D(p^i)]=[D]\cap^{n+1-m-k+i}[H]$ for $1\le i \le m+k-2$. 
	We claim that $\eta:=\check{p}_k+\sum_{i=1}^{m+k-2}(-1)^iu^{-i} \check{p}^i_k$ represents a closed cochain in the positive $S^1$-equivariant cochain complex. To see this, we compute
	\begin{eqnarray*}
		\delta^{S^1}(\eta)&=& \delta^0_+(\check{p}^0_k)+\sum_{i=1}^{m+k-2}\sum_{j=0}^i (-1)^iu^{j-i}\delta^j_+(\check{p}^i_k) \\
		& \stackrel{\eqref{p5}}{=} & \delta^0_+(\check{p}^0_k)+\sum_{i=1}^{m+k-2}\sum_{j=0}^1 (-1)^iu^{j-i}\delta^j_+(\check{p}^i_k) \\
		& = & \sum_{i=0}^{m+k-3} (-1)^i u^{-i} (\delta_+^0(\check{p}_k^i)-\delta^1_+(\check{p}_k^{i+1}))+(-1)^{m+k-2}u^{2-m-k}\delta_+^0(\check{p}_k^{m+k-2})\\
		& \stackrel{\eqref{p2},\eqref{p5}}{=} & \sum_{i=0}^{m+k-3}(-1)^iu^{-i}(k\hat{p}^{i+1}_k-k\hat{p}^{i+1}_k) = 0
	\end{eqnarray*}
	One the other hand, we compute
	\begin{eqnarray*}
	 \pi_0\circ \delta(\eta) & = & \pi_0\circ \left( \delta^0_{+,0}(\check{p}^0_k)+\sum_{i=1}^{m+k-2}\sum_{j=0}^i (-1)^iu^{j-i}\delta^j_{+,0}(\check{p}^i_k)\right) \\
	 & \stackrel{\eqref{p3},\eqref{p5}}{=} & k\mathrm{GW}^{X,D}_{0,1,(k),A}([pt], [D]\cap^{n+1-m-k}[H])\ne 0
	\end{eqnarray*}
	Finally, $u^{m+k-1}(\eta) =0$ by definition. Therefore $\check{X}$ admits a $m+k-2$-dilation.
\end{proof}

\begin{remark}
	Following the fibration argument in \cite[\S 5]{zhou2019symplectic}, we can prove that the order of semi-dilation for $\check{X}$ is exactly $k+m-2$. It is natural to expect that examples in Theorem \ref{thm:SD} carry a $k+m-2$-dilation instead of a $k+m-2$-semi-dilation. However, this requires checking the vanishing of many other relative Gromov-Witten invariants like \cite[Theorem A]{zhou2019symplectic}, i.e.\ the differentials in \eqref{p4} is no longer irrelevant. 
\end{remark}

\begin{remark}
	One can derive a formula of the positive $S^1$-equivariant symplectic cohomology of smooth divisor complements similar to \cite{diogo2019symplectic}. Under the monotonicity assumption, we can use the $S^1$-equivariant transversality to apply a quotient construction of the positive $S^1$-equivariant symplectic cohomology instead of the Borel construction used in this paper, see \cite{bourgeois2017s}. In this case, the cochain complex is generated by $p_k$ for $p\in \Crit(f_D)$ and $k\ge 1$, with differential purely determined by Morse differential of $f_D$. All the other contributions from check orbits to hat orbits in \cite[Theorem 9.1]{diogo2019symplectic} now contributes to the $R[u]$-module structure. Note that by the Gysin exact sequence \cite{bourgeois2013gysin}, the positive $S^1$-equivariant symplectic cohomology as a $R[u]$-module can recover the regular positive symplectic cohomology. In particular, this is just a reformulation of \cite[Theorem 9.1]{diogo2019symplectic}. When the monotonicity assumption fails, we may not have a simple collections of degeneration as in \cite{diogo2019symplectic}, but the other consequence of monotonicity, namely $S^1$-equivariant transversality, is available through polyfold techniques \cite{zhou2020quotient}. In particular, it seems that positive $S^1$-equivariant symplectic cohomology (just as a group not a $R[u]$-module) is much easier to compute for general smooth divisor complement.
\end{remark}

\bibliographystyle{plain} 
\bibliography{ref}

\end{document}